\newtheorem{thm}{Theorem}[section]
\newtheorem{lem}[thm]{Lemma}
\newtheorem{cor}[thm]{Corollary}
\newtheorem{rmk}[thm]{Remark}
\newtheorem{conj}[thm]{Conjecture}
\newtheorem{definition}{Definition}
\title{Positive 2-bridge knots and chirally cosmetic surgeries}
\author{Michael Huang \thanks{University of California, Berkeley, Berkeley, CA. ({\tt michael\_huang@berkeley.edu})},
Zelong Li \thanks{University of California, Los Angeles, Los Angeles, CA. ({\tt lizelong831@ucla.edu})},
Rahi Tanaz \thanks{University of California, Los Angeles, Los Angeles, CA. ({\tt rtanaz@ucla.edu})},
Chengyi Zhang \thanks{University of California, Los Angeles, Los Angeles, CA. ({\tt chriszhang09@ucla.edu})}}
\date{}
\begin{document}
\graphicspath{ {figures/} }

\maketitle

\begin{abstract}
    In this paper we verify that with the exception of the $(2, 2n+1)$ torus knots, positive 2-bridge knots up to 31 crossings do not admit chirally cosmetic surgeries. A knot \(K\) admits chirally cosmetic surgeries if there exist surgeries \(S^3_r\) and \(S^3_{r'}\) with distinct slopes \(r\) and \(r'\) such that \(S^3_r(K) \cong -S^3_{r'}(K)\), where the negative represents an orientation reversal. To verify this, we use the obstruction formula from \cite{konstantinos} which relates classical knot invariants to the existence of chirally cosmetic surgeries. To check the formula, we develop a Python program that computes the classical knot invariants \(a_2\), \(a_4\), \(v_3\), \(\det\), and \(g\) of a positive 2-bridge knot. 
\end{abstract}

\textbf{Keywords:} knot theory; cosmetic surgery; Dehn surgery; rational knots

\section{Introduction}

\subsection{Motivation}

Given a knot $K\in S^3$ and a slope $r\in \mathbb{Q}\cup\{\infty\}$, the manifold resulting from Dehn surgery with slope \(r\) on $K$ is denoted by $S_r^3(K)$. If there exist distinct slopes $r$ and $r'$ such that $S_r^3(K) \cong S_{r'}^3(K)$, then the surgeries \(S^3_r\) and \(S^3_{r'}\) are said to be purely cosmetic. If instead $S_r^3(K) \cong -S_{r'}^3(K)$, then the surgeries are said to be chirally cosmetic. 

Originally proposed by Bleiler in \cite{kirby} (Problem 1.81 A), the purely cosmetic surgery conjecture claims that there exist no purely cosmetic surgeries for nontrivial knots in $S^3$. Although this conjecture is still open, there have been many results on special families of knots. It was proved by Ichihara, Jong, Mattman, and Saito that 2-bridge knots admit no purely cosmetic surgeries \cite{Ichihara_2021}. Furthermore, Hanselman has proved the purely cosmetic surgery conjecture for all knots for which each prime summand has 16 or fewer crossings \cite{hanselman}. Varvarezos has also shown that 3-braid knots do not admit purely cosmetic surgeries \cite{konstantinos_braid}. 

Using machinery from modern Heegaard Floer homology, Varvarezos was also able to construct obstructions to chirally cosmetic surgeries for special families of knots. We use the results from Varvarezos in \cite{konstantinos} to prove the non-existence of chirally cosmetic surgeries on positive 2-bridge knots, with the exception of $(2,2n+1)$ torus knots.

\subsection{Main result}

We aim to prove the following result. 

\begin{thm}
    \label{main}
    Positive 2-bridge knots up to 31 crossings do not admit chirally cosmetic surgeries, with the exception of $(2,2n+1)$ torus knots. 
\end{thm}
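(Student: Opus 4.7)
The plan is to reduce the theorem to a finite computational verification via the obstruction from \cite{konstantinos}. That obstruction gives a polynomial relation among the classical invariants $a_2$, $a_4$, $v_3$, $\det(K)$, and the Seifert genus $g(K)$ which must hold for any knot admitting a chirally cosmetic surgery. The strategy is therefore: (i) enumerate the finite list of positive 2-bridge knots of crossing number at most $31$, (ii) compute the five invariants for each, and (iii) verify that the obstruction fails for every knot on the list other than the $(2, 2n+1)$ torus knots.

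For step (i), I would parametrize 2-bridge knots by their Schubert normal form $p/q$ and use the fact that the positive 2-bridge knots are precisely those admitting a continued-fraction expansion $[a_1, a_2, \dots, a_n]$ with all $a_i \geq 1$. Since the crossing number of such a knot equals $a_1 + a_2 + \cdots + a_n$ (after choosing the minimal expansion), enumerating positive 2-bridge knots through $31$ crossings amounts to enumerating compositions of $c$ for $c \leq 31$, then quotienting by the standard equivalence $p/q \sim p/q'$ with $qq' \equiv 1 \pmod p$ (plus reversal of the continued fraction) to avoid duplicate counting. The $(2, 2n+1)$ torus knots correspond to the singleton compositions $[2n+1]$, and I would flag them so they can be excluded from the check.

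For step (ii), each invariant has a computable formula in terms of the continued fraction data. The determinant is simply $p$. The Seifert genus of a 2-bridge knot is $\lfloor n/2 \rfloor$ where $n$ is the number of even terms in a suitably normalized continued fraction, or equivalently half the breadth of the Alexander polynomial; one can compute it directly from the Conway form. The coefficients $a_2$ and $a_4$ are the second and fourth coefficients of the Conway polynomial, extractable from the determinantal formula for $\Delta_K(t)$ of a 2-bridge knot, and $v_3$ (the primitive Vassiliev invariant of order 3) can be obtained from the Jones polynomial via $v_3 = -\tfrac{1}{36}(V_K''(1) + 3V_K'(1))$ or from the HOMFLY derivative. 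The Python program handles each of these formulae, so step (ii) is a routine implementation.

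The main obstacle is step (iii): organizing the data carefully enough to be certain the obstruction truly fails across the entire list, and making sure the enumeration in (i) is genuinely exhaustive without missing any knot via the continued-fraction ambiguities (both the $p/q \sim p/q^{-1}$ symmetry and the identity $[a_1,\dots,a_n,1] = [a_1,\dots,a_n+1]$). I would address this by enumerating all continued fractions in a canonical form (say, requiring $a_n \geq 2$) and then normalizing each resulting $p/q$ to its lexicographically smallest pair under the symmetry group. Once enumeration is correct, the remaining work is to run the obstruction formula from \cite{konstantinos} on each knot, confirm that the $(2,2n+1)$ torus knots are the only ones for which all hypotheses of the obstruction are vacuous (as expected, since these are known to genuinely admit chirally cosmetic surgeries), and record that every other entry is ruled out.
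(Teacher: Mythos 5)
Your overall strategy is the same as the paper's: reduce the theorem to a finite computation by enumerating positive 2-bridge knots through their continued fractions, computing $a_2$, $a_4$, $v_3$, $\det$, $g$, and checking that the Varvarezos obstruction fails for everything except the $(2,2n+1)$ torus knots. However, there is a concrete error in your step (i). The criterion ``all $a_i \geq 1$'' does \emph{not} characterize positive 2-bridge knots --- every 2-bridge knot admits a continued-fraction expansion with all entries positive (this is just the standard alternating form), so your enumeration would produce all 2-bridge knots, not the positive ones. The correct characterization, which the paper takes from Tanaka and Ozba\u{g}c{\i} (Theorem \ref{3.1} and Lemma \ref{cond_for_pos}), is an odd-length expansion $[a_1,\dots,a_{2n+1}]$ with every $a_i>0$ \emph{and every even-indexed $a_{2i}$ even}. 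This matters beyond bookkeeping: the obstruction of Theorem \ref{v_3 decomp} requires $\tau(K)=g(K)$, which the paper justifies precisely because the knot is positive (via Hedden), and requires $v_3(K)\neq 0$, which is justified by Stoimenow's lower bound for positive diagrams. For the non-positive knots your enumeration would sweep in, neither hypothesis is available, so ``the obstruction fails'' would carry no logical content for them, and you would have no guarantee of having correctly isolated the class the theorem is actually about.

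Two smaller points. First, your stated Jones-polynomial formula for $v_3$ is off by one derivative order: the correct identity is $v_3(K) = -\tfrac{1}{36}V_K'''(1) - \tfrac{1}{12}V_K''(1)$, not an expression in $V_K''(1)$ and $V_K'(1)$; in a proof that is entirely a computation, this is not a cosmetic slip. Second, the $(2,2n+1)$ torus knots are excluded not because the obstruction's hypotheses are ``vacuous'' for them but because they are the only 2-bridge L-space knots, so Lemma \ref{obstruction for alternating knot} (which feeds $\V(K)$ into the obstruction) does not apply to them --- and, independently, because they genuinely do admit chirally cosmetic surgeries. With the positivity criterion corrected and the hypotheses of the obstruction explicitly verified for the positive 2-bridge class, your argument matches the paper's.
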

By showing that this result holds for positive 2-bridge knots up to 31 crossings, we will have verified that the conjecture holds for over 1.3 million knots. To prove Theorem \ref{main}, we will first introduce the background necessary to understand a useful obstruction formula and prove some important theorems and lemmas along the way. We will then describe the Python program we developed in order to calculate the knot invariants used in the obstruction formula, and finally conclude with our results and further directions.

\subsection{Notation}

Let $K$ be a knot. We will use the following notation throughout the paper. 
\begin{align*}
    &\text{Jones polynomial: } V_K(t) \\
    &\text{Alexander polynomial: } \Delta_K(t) \\
    &\text{Conway polynomial: } \nabla_K(z)\\
    &\text{Conway polynomial coefficients: } \nabla_K(z) = a_0 + a_2z^2+a_4z^4 + ... \\
    &(p,q) \text{ Rational knot: } C(p,q) \\
    &\text{Continued fraction: } p/q = a_1+\frac{1}{a_2+\frac{1}{...+\frac{1}{a_n}}} =: [a_1,a_2,...,a_n] \\
    &\text{Bridge Number: } Br(K)
\end{align*}

\section{Background}

\subsection{Positive 2-bridge knots}
In this section, we define the class of positive 2-bridge knots and give a result about continued fraction characterization. 

\begin{definition}
    The {\normalfont sign} associated to a crossing of the knot K is indicated by Figure \ref{sign} below. 
    \begin{figure}[h]
        \begin{center}
            \includegraphics[scale=1]{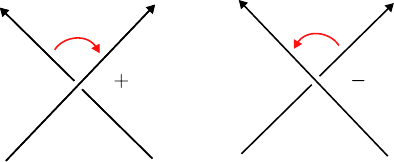}
        \end{center}
        \caption{Signed crossings}
        \label{sign}
    \end{figure}
    
\end{definition}
A crossing is positive if the understrand is rotated clockwise to match the direction of the overstrand, while a crossing is negative if the understand is rotated counter-clockwise to match the direction of the overstrand.

\begin{definition}
    A knot $K$ is {\normalfont positive} if all of its crossings are positive in some projection of $K$.
\end{definition}

\begin{definition}
    The {\normalfont bridge number} $Br(K)$ of a knot $K$ is defined to be the minimal number of bridges required in any projection of a $K$. A {\normalfont bridge} in \(K\) is an arc which has at least one overcrossing.
\end{definition}

\begin{rmk}
    $Br(K)$ is a knot invariant.
\end{rmk}

\begin{definition}
    A {\normalfont 2-bridge knot} is a knot with $Br(K) = 2$. A {\normalfont positive 2-bridge knot} is a 2-bridge knot which is also a positive knot.
\end{definition}

As currently defined, the class of positive 2-bridge knots is difficult for a computer program to enumerate, so we seek a more tractable definition. 

\begin{definition}
    A {\normalfont tangle} in a disk in the plane of projection of a knot $K$ such that $K$ crosses the boundary circle four times. A {\normalfont rational tangle} is a tangle that may be unwound into one of the two elementary 2-tangles in Figure \ref{elementary} by repeatedly twisting the endpoints. 
\end{definition}
\begin{figure}[h]
    \begin{center}
        \includegraphics[scale=0.9]{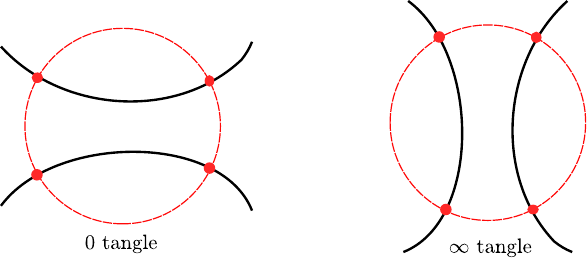}
    \end{center}
    \caption{Elementary 2-tangles}
    \label{elementary}
\end{figure}

\begin{definition}
    A {\normalfont rational link} is the link that results from closing off the ends of a rational tangle by plait closure. If the resulting link is a knot, it is known as a {\normalfont rational knot}.  
\end{definition} 

\begin{thm}[\S 3.2, \cite{adams}]
    The set of 2-bridge knots is identical to the set of rational knots. 
    \label{rational}
\end{thm}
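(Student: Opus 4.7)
The plan is to prove the set equality by two inclusions, with the direction rational $\Rightarrow$ 2-bridge being essentially definitional and the direction 2-bridge $\Rightarrow$ rational carrying the real content.

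For the easy direction, I would start with a rational knot $K$ presented as the plait closure of a rational tangle $T$. The plait closure caps off the four boundary endpoints of $T$ by joining the top two and the bottom two with simple arcs; each cap can be drawn as a single overstrand lying above the rest of the diagram, so each cap qualifies as a bridge. This exhibits a projection of $K$ with exactly two bridges, giving $Br(K) \le 2$, and excluding the unknot yields $Br(K) = 2$.

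For the harder direction, I would begin with $K$ in a 2-bridge position, meaning a projection whose height function has exactly two local maxima (the bridges) and two local minima. Slicing the diagram along a horizontal line just below the bridge arcs and another just above the minima isolates a planar disk in which $K$ meets the boundary in four points and in which all the essential crossings occur. This yields a 2-tangle $T$ whose plait closure recovers $K$, so the problem reduces to showing that $T$ is rational.

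The main obstacle is proving rationality of $T$, and my approach would follow Schubert's classical normal form argument, inducting on the crossing number of $T$. The inductive step locates a maximal block of consecutive horizontal or vertical half-twists between two adjacent strands on one side of $T$ and unwinds it, producing an ambient-isotopic 2-tangle $T'$ with strictly fewer crossings; each unwinding mirrors one step of the Euclidean algorithm applied to a fraction $p/q$ naturally associated to $T$. The key subtlety to verify is that, for a 2-tangle extracted from a 2-bridge diagram, such a twist region must always exist: four strands cannot interbraid nontrivially inside $T$ without forcing the introduction of additional local maxima in any projection, which would contradict $Br(K)=2$. The induction terminates at one of the elementary 2-tangles of Figure \ref{elementary}, so $T$ is rational by definition and $K$ is a rational knot.
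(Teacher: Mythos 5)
The paper does not actually prove Theorem \ref{rational}; it quotes it from Adams (\S 3.2), where the content is essentially Schubert's classification of 2-bridge knots as 4-plats in normal form. Judged on its own terms, your outline has the right overall architecture (two inclusions, plat position, an unwinding/Euclidean-type reduction), but both directions have a gap exactly at the step carrying the content. In the easy direction, drawing the two closure caps as overstrands does \emph{not} exhibit a projection with two bridges: the crossings internal to the rational tangle still each have an overstrand, so that projection generally has many maximal overpasses, and $Br(K)\le 2$ does not follow. What is actually needed is an isotopy of the closed-up tangle into Schubert normal form, in which \emph{every} crossing has one of the two designated arcs as its overstrand (equivalently, the complement of the two bridges contains no overcrossings); producing that isotopy from the continued-fraction standard form of the tangle is the real work in this direction, and your sketch skips it.

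In the hard direction, the step you flag as the key subtlety is justified by a false claim. In a plat position the four descending strands \emph{do} interbraid nontrivially without creating any new local maxima --- the middle two strands twisting around each other is precisely how all the crossings of a 2-bridge knot arise --- so ``nontrivial interbraiding would force extra maxima, contradicting $Br(K)=2$'' cannot be the reason the extracted tangle is rational, and an induction that presumes a visible twist region in a normal-form diagram assumes what is to be proved. (Two smaller issues: slicing along two horizontal lines gives a region meeting $K$ in eight points, not four --- you must keep the minima inside the disk; and passing from the paper's definition of bridge number, via maximal overpasses, to a height-function plat position with exactly two maxima is itself a standard but nontrivial isotopy argument you have tacitly assumed.) A correct and in fact simpler route, using the paper's definition of rational tangle, is this: once $K$ is in plat position, the part below the two bridge arcs is a braid on four monotone strands capped off below by the two minima; since the strands are monotone, the topmost crossing of this tangle is always a twist of two adjacent free endpoints, so the crossings can be undone one at a time by endpoint twists until only one of the elementary tangles of Figure \ref{elementary} remains. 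This exhibits the tangle as rational and $K$ as its plait closure, which is the inclusion you wanted; the finer continued-fraction bookkeeping (the Euclidean algorithm you invoke) is only needed for the classification of rational tangles by fractions, not for Theorem \ref{rational} itself.
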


Corresponding to the number of twists required before a rotation needed to completely unwind a rational tangle into one of the elementary 2-tangles, we assign a continued fraction decomposition \([a_1, \dots, a_n]\) to a rational knot. We also assign it the rational number \(p/q\) that its continued fraction sums to, and hence denote a rational \(p/q\) knot \(C(p, q)\). Thus, from Theorem \ref{rational}, we may also characterize 2-bridge knots by a rational number \(p/q\) and a continued fraction decomposition \([a_1, \dots, a_n]\). 

\begin{thm}[Thm. 3.1, \cite{tanaka}] 
    If a 2-bridge knot $C(p,q), \frac{p}{q} = [a_1,...,a_{2n+1}]$ satisfies the following conditions:
    \begin{enumerate}
        \item[i)] Each $a_i$ is positive
        \item[ii)] $a_i$ is even if the index is even 
    \end{enumerate}
    then $C(p,q)$ is a positive knot. \footnote{For 2-bridge knots, the strongly quasipositive condition is equivalent to positivity, see \cite{ozbagci} remark 1.5}
    \label{3.1}
\end{thm}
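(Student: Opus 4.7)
The plan is to exhibit a diagram of $C(p,q)$ in which every crossing is positive, using the standard diagram associated to the continued fraction expansion $[a_1, \ldots, a_{2n+1}]$.

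First I would construct this diagram explicitly: start from one of the elementary 2-tangles of Figure \ref{elementary}, and iteratively insert $a_i$ twists, alternating between horizontal twists for odd $i$ and vertical twists for even $i$. The positivity of each $a_i$ fixes the direction of each twist, and the resulting rational tangle is then closed by plait closure to obtain $C(p,q)$. Since $C(p,q)$ is a knot, there is a well-defined orientation on its single component; this orientation, determined by the pairings imposed by the plait closure at the top and bottom of the tangle, can be propagated deterministically through each twist region of the diagram.

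Next I would analyze the sign of each crossing by working through the twist regions in order. Within a single twist region, the two strands being twisted enter with fixed orientations (determined by what precedes the region), and each crossing in the region has a sign determined by those orientations together with the twist direction. For horizontal (odd-indexed) regions the strand orientations arriving from the preceding construction can be arranged so that each crossing is positive. For vertical (even-indexed) regions, a single half-twist would reverse the relative orientation of the two strands passing through, so the region must contain a whole number of full twists in order for the strand orientations to emerge in a configuration still compatible with positivity in the next horizontal region. This is exactly the hypothesis that $a_i$ is even when $i$ is even. Assuming this condition together with $a_i > 0$ throughout, an induction on $i$ then confirms that every crossing in every region is positive.

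The main obstacle is the careful bookkeeping of strand orientations as they pass through successive twist regions, and identifying precisely how a vertical twist preserves or disturbs the orientation pattern needed for positivity. This is a matter of explicit pictorial analysis rather than a deep technical hurdle; the challenge lies in being precise enough about the conventions for ``positive twist direction,'' ``horizontal vs.\ vertical insertion,'' and plait closure to confirm that condition (ii) is exactly what synchronizes the orientations from one twist region to the next.
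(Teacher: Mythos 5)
The paper does not prove this statement at all --- it is imported verbatim as Theorem 3.1 of \cite{tanaka} --- so there is no internal proof to compare against. Your strategy (draw the standard 4-plat diagram attached to $[a_1,\dots,a_{2n+1}]$, orient the single component, and check that every crossing is positive, with condition (ii) entering as the parity constraint that keeps the orientation pattern synchronized from one twist region to the next) is the natural direct argument, and it is the right mechanism: for example $[2,1,1]$ gives the figure-eight knot, which is not positive, so the evenness hypothesis is genuinely load-bearing in exactly the way you describe.

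That said, as written the proposal has one substantive gap beyond ``bookkeeping to be done.'' Your analysis of the even-indexed regions only addresses their \emph{effect on subsequent regions} (an odd number of half-twists would scramble the orientation configuration entering $a_{2i+1}$); it never establishes the signs of the $a_{2i}$ crossings \emph{inside} those regions, which contribute to the diagram just as much as the odd-indexed ones. This is precisely where the conventions bite: in the paper's Figure \ref{normal_twist_notation} the upper-row boxes carry $-a_2, -a_4,\dots$, i.e.\ the opposite unoriented handedness from the lower row, and one must check that this reversed handedness combined with the different strand-orientation configuration there (parallel versus antiparallel) still yields positive crossings. All crossings within a single twist region share one sign, so this is a finite check, but it is the check on which the theorem actually rests, and a global ``can be arranged'' is not available --- for a knot the crossing signs are independent of the choice of overall orientation, so the induction must \emph{verify} rather than \emph{choose} the orientations. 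Carrying out that sign computation for one odd region, one even region, and the plait closure would complete the argument; alternatively, one can follow the route the cited literature actually takes, proving strong quasipositivity and invoking the equivalence with positivity for 2-bridge knots recorded in the footnote.
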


The converse of Theorem \ref{3.1} is given in Lemma \ref{2.3} below. With these results we obtain an alternative characterization of 2-bridge knots that is much better suited for a computer program. 

\begin{lem} [Cor. 2.2, \cite{ozbagci}]
    \label{cond_for_pos} 
    If $C(p, q)$ is a positive rational knot, then for some \(n \in \mathbb{Z}\) it has a continued fraction decomposition of the form
    \begin{align*}
        \frac{p}{q} = a_1 + \frac{1}{a_2+\frac{1}{\ddots + \frac{1}{a_{2n+1}}}}
    \end{align*}
    where each $a_{2i}$ is even for $i \in \{1,2,...,n\}$, and each $a_i > 0$.
    \label{2.3}
\end{lem}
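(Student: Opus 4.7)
The plan is to extract the desired continued fraction directly from a positive projection of $C(p,q)$, leveraging the correspondence between 2-bridge knots and rational tangles given by Theorem \ref{rational}. Since any two continued fractions for the same rational $p/q$ yield (via the tangle / plait-closure construction) the same knot, it suffices to start from a continued fraction that arises naturally from a positive diagram and then renormalize it using continued-fraction identities that preserve both $p/q$ and the positivity of all entries.

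First I would fix a positive projection $D$ of $C(p,q)$. By Theorem \ref{rational}, $D$ is (up to Reidemeister moves) the plait closure of a rational tangle $T$, and the twist structure of $T$ reads off a continued fraction $[b_1, \dots, b_m]$ for $p/q$, with each $b_i$ equal to the signed number of crossings in the $i$-th twist region. Since every crossing of $D$ is positive and crossings inside a single twist region all share a sign, this immediately forces $b_i > 0$ for every $i$, establishing the positivity condition.

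To enforce odd length, I would apply the standard identities
\begin{align*}
    [b_1, \dots, b_m] &= [b_1, \dots, b_m - 1, 1] \quad (\text{when } b_m \geq 2), \\
    [b_1, \dots, b_{m-1}, 1] &= [b_1, \dots, b_{m-1} + 1],
\end{align*}
each of which preserves both the rational value and the positivity of all entries, so at most one such step converts $m$ to the desired form $2n+1$.

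The main obstacle is enforcing the parity condition on the even-indexed entries. My plan is to analyze how the orientations of the four boundary strands propagate through the alternating horizontal and vertical twist regions of $T$. Odd-indexed and even-indexed $b_i$ correspond to twist regions of opposite types, and the parity of the twist count in a region controls whether the two incoming strands exit parallel or antiparallel. Demanding that plait closure yield a diagram whose strands can be oriented to make every crossing positive constrains this parity pattern, and a case analysis should show that consistency forces each $b_{2i}$ to be even. If after step one some $b_{2i}$ is odd, I would absorb the extra crossing into adjacent odd-indexed entries via further local identities, chosen so that positivity is preserved. The hard part is this parity analysis: rigorously verifying that the orientation / positivity constraints really do force even parity at even indices, and that the required local rewrites can always be performed without introducing negative or zero entries. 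An alternative and possibly cleaner route is to prove a number-theoretic uniqueness statement — that every positive rational $p/q$ admits a unique continued fraction of the form $[a_1, \dots, a_{2n+1}]$ with each $a_i > 0$ and each $a_{2i}$ even — and then match this unique expansion to the continued fraction extracted from the positive projection via a suitable invariant such as determinant or genus.
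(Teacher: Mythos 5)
The paper does not actually prove this lemma; it is quoted directly as Corollary 2.2 of \cite{ozbagci} (with only a remark reconciling mirror conventions), so there is no in-paper argument to compare against. Judged on its own, your sketch has a genuine gap at its very first step. The definition of a positive knot only guarantees that \emph{some} projection of $C(p,q)$ has all crossings positive; it does not say that the standard $4$-plat (plait-closure-of-a-rational-tangle) diagram is that projection. When you write that the positive diagram $D$ ``is (up to Reidemeister moves) the plait closure of a rational tangle,'' you are implicitly transporting the positivity of $D$ across Reidemeister moves to the standard diagram, but positivity of a \emph{diagram} is not preserved by Reidemeister moves (R2 creates cancelling pairs of crossings of opposite sign, and two diagrams of the same knot can have entirely different sign data). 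Closing this gap is essentially the whole content of the lemma: one needs either a theorem in the spirit of Nakamura's result that a positive alternating link admits a diagram that is simultaneously positive and alternating, combined with the essential uniqueness of reduced alternating diagrams of $2$-bridge knots, or else an invariant-theoretic argument (signature, Seifert or Goeritz form, $\tau$, etc.) ruling out continued fractions not of the stated shape. Your sketch supplies neither.

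Your fallback route also rests on a false premise: it is \emph{not} true that every positive rational $p/q$ (with $p$ odd, $0<q<p$) admits an expansion $[a_1,\dots,a_{2n+1}]$ with all $a_i>0$ and all even-indexed entries even. For example, the only all-positive expansions of $5/2$ are $[2,2]$ and $[2,1,1]$, and neither has the required form --- consistently with the fact that the figure-eight knot $C(5,2)$ is amphichiral and hence not positive. So ``uniqueness plus matching by an invariant'' cannot be set up as stated; the existence of the special expansion is precisely the property that singles out positive $2$-bridge knots among all $2$-bridge knots, not a fact about all rationals. The parity analysis you defer (that orientation constraints force $a_{2i}$ even in a positive standard diagram) is correct in spirit and is the routine half of the argument, but it only becomes available once the first gap is closed.
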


\begin{rmk}
     We note that in the paper \cite{ozbagci}, the author uses the mirror image of the $C(p,q)$ knot we use. Therefore, their definition of $C(p,p-q)$ is equivalent to our definition of $C(p, q-p)$. However, due to the results of \cite{kauffman_rational}, we know that $C(p,q-p) = C(p, q)$, as $q \equiv q-p \mod{p}$. 
\end{rmk}

The schematic diagram for a positive 2-bridge knot corresponding to the continued fraction decomposition given in Lemma \ref{cond_for_pos} is illustrated in Figure \ref{normal_twist_notation} below. The number \(a_i\) inside each box denotes the number of half twists to be inserted. 
Note that the figure results in a knot when \(\sum_{i = 1}^{2n + 1} a_i\) is odd, while the figure results in a link when \(\sum_{i = 1}^{2n + 1} a_i\) is even. 

\begin{figure}[h]
    \begin{center}
        \includegraphics[scale=1.3]{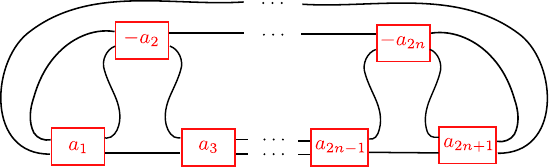}
    \end{center}
    \caption{Positive 2-Bridge knot}
    \label{normal_twist_notation}
\end{figure}

\begin{rmk}
    The negative sign on the upper row is due to how half-twists are signed in our notation. Some authors use the opposite notation where instead the bottom row would be negated. 
\end{rmk}

\subsection{(Chirally) cosmetic surgery}

In this section we give a brief overview of Dehn surgery and the cosmetic surgery conjecture. All surgeries unless otherwise specified will be done on the 3-sphere $S^3$. 

\begin{definition}
    Given a knot $K$ and a slope $r = \frac{p}{q} \in \mathbb{Q} \cup \{\infty\}$, {\normalfont Dehn surgery with slope \(r\)} (often abbreviated {\normalfont \(r\)-surgery}) on $K$ is the operation of removing a tubular neighborhood of $K \subset S^3$ and gluing in a solid torus along the boundary homeomorphism $\phi: T^2 \rightarrow T^2$, $m \mapsto p \mu + q \lambda$. The resulting manifold is denoted \(S^3_r(K)\). 
    \label{dehn}
\end{definition}
Here, $m$ denotes the meridian on the boundary of the solid torus, $\mu$ denotes the meridian of the tubular neighborhood of $K$, and $\lambda$ denotes the unique Seifert longitude of $K$. It is well known that all the self homeomorphisms of $T^2$ up to isotopy are given in the form specified in Definition \ref{dehn}. 

\begin{rmk}
    By our convention, $\infty$-surgery is $\frac{1}{0}$-surgery. This is the identity since the meridian of the solid torus is glued back to the meridian of the tubular neighborhood, resulting back in our original knot \(K \subset S^3\).  
\end{rmk}

We illustrate the idea of of Dehn surgery with slope \(r = p/q\) on the trefoil in the Figures \ref{tubular neighborhood} and \ref{gluing} below. For more about Dehn surgery, see \cite{lickorish}, \cite{saveliev}, \cite{rolfsen}. 
\begin{figure}[h]
    \begin{center}
        \includegraphics[scale=1]{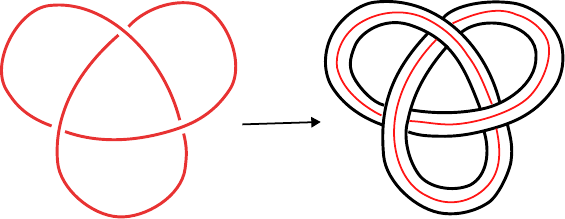}
    \end{center}
    \caption{Taking a tubular neighborhood of trefoil.}
    \label{tubular neighborhood}
\end{figure}

\begin{figure}[h]
    \begin{center}
        \includegraphics[scale=1]{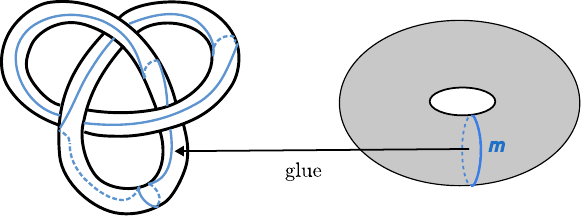}
    \end{center}
    \caption{Gluing meridian of solid torus to (p,q)-curve on torus boundary. Here, $p=q=1$.}
    \label{gluing}
\end{figure}

\begin{definition}
    Given a knot $K$, we say that $K$ admits {\normalfont purely cosmetic surgeries} if there exist two distinct slopes $r$ and $r'$ such that $S^3_r(K) \cong S^3_{r'}(K)$ up to orientation preserving homeomorphism. 
\end{definition}

\begin{conj}[Purely cosmetic surgery conjecture]
    No nontrivial knot in $S^3$ admits purely cosmetic surgeries. 
\end{conj}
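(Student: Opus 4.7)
The plan is to reduce the conjecture, for any given knot $K$, to a finite case analysis that combines classical invariant obstructions with Heegaard Floer obstructions, and then to argue that this finite case analysis is always empty. First I would invoke the Boyer--Lines obstruction: comparing the Casson--Walker invariants of $S^3_r(K)$ and $S^3_{r'}(K)$ forces the second Conway coefficient $a_2(K)$ to vanish whenever $K$ admits purely cosmetic surgeries. This alone kills the overwhelming majority of knots in $S^3$, and in particular every knot with $\Delta_K''(1) \ne 0$, reducing attention to the subclass with $a_2(K) = 0$.

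Next I would apply the Ni--Wu refinement coming from $d$-invariants, which shows that any surviving cosmetic pair must have slopes of the form $r = p/q$ and $r' = -p/q$ with $q$ subject to strong divisibility constraints dictated by $\tau(K)$ and the Heegaard Floer genus of $K$. Hanselman's sharpening then bounds the relevant quantities in terms of the Seifert genus and Heegaard Floer thickness, and together these results reduce the problem to checking a small, explicit list of candidate slopes for each $K$ with $a_2(K)=0$. The upshot is a finite, knot-by-knot certificate: for any individual knot, one needs only to rule out a bounded family of slope pairs.

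The third step would be to eliminate the residual candidates by a direct computation in $\widehat{HF}$: for each candidate pair $(r,-r)$ one compares the Heegaard Floer package of $S^3_r(K)$ and $S^3_{-r}(K)$, or alternatively invokes a secondary invariant such as a Casson--Gordon signature or the $\mathrm{SU}(2)$ character variety of the surgered manifold. This is exactly the template successfully executed for the restricted classes already in the literature: the 2-bridge knots of \cite{Ichihara_2021}, the prime summands of at most 16 crossings of \cite{hanselman}, and the 3-braid knots of \cite{konstantinos_braid}.

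The central obstacle, and the reason the conjecture remains open in full generality, is that for knots of large genus with $a_2(K)=0$ and delicately tuned Heegaard Floer data the Ni--Wu--Hanselman filter leaves a nonempty residue of candidate slope pairs, and no currently known secondary invariant is sharp enough to eliminate them uniformly across all such knots. A complete proof would most likely require either a new orientation-sensitive invariant of rational homology spheres that is computable directly from $K$, or a structural rigidity theorem forcing any putative cosmetic pair to come from a knot already excluded by the filter. Accordingly, the honest outcome of the plan is not a proof but a reduction: it converts the conjecture into a verification problem inside a proper subclass of knots, where progress is made one class at a time.
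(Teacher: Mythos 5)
The statement you were asked to prove is labeled as a conjecture in the paper, and the paper neither proves it nor claims to: the surrounding text explicitly says the purely cosmetic surgery conjecture ``is still open'' and cites only partial results (2-bridge knots, prime summands with at most 16 crossings, 3-braid knots). So there is no proof in the paper to compare your attempt against, and your proposal --- to its credit --- arrives at the same honest conclusion, ending with an explicit admission that the plan yields ``not a proof but a reduction.''

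As a survey of the known obstruction pipeline, your outline is accurate: the Boyer--Lines Casson--Walker argument does force $a_2(K)=0$, the Ni--Wu $d$-invariant refinement does constrain the slopes to $\pm p/q$ with divisibility conditions tied to $\tau(K)$, and Hanselman's bounds do reduce each individual knot to a finite slope check. The genuine gap --- which you correctly identify yourself --- is in the third step: there is no known invariant that uniformly eliminates the residual candidate pairs for knots with $a_2(K)=0$ and large genus, so the finite case analysis cannot be shown to be empty in general. Since the paper only uses this conjecture as motivating context (its actual results concern the \emph{chirally} cosmetic case for positive 2-bridge knots via the Varvarezos obstruction), the appropriate assessment is simply that neither you nor the paper proves the statement, and your writeup should not be presented as a proof of it.
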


We consider a related case of this conjecture which we will call the chirally cosmetic surgery conjecture. 

\begin{definition}
    A knot $K$ admits {\normalfont chirally cosmetic surgeries} if there exist two distinct slopes $r$ and $r'$ such that the manifolds resulting from the $r$ and $r'$ surgeries are homeomorphic after an orientation reverse, that is, $S^3_r(K) \cong -S^3_{r'}(K)$. 
\end{definition}

\begin{conj} [Chirally cosmetic surgery conjecture]
\label{chirallyconj}
    A knot $K$ does not admit chirally cosmetic surgeries if it is not the unknot, a $(2,2n+1)$ torus knot, or an amphichiral knot. 
\end{conj}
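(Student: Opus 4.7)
The plan is to combine the obstruction formula from \cite{konstantinos} with the geometric decomposition of the knot complement, attacking each geometric piece with tailored tools. By geometrization, the complement $S^3 \setminus K$ is Seifert-fibered (for torus knots), toroidal (for satellite knots), or hyperbolic, so it suffices to establish the conjecture separately in each of these three cases.

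In the Seifert-fibered case, I would invoke Moser's classification of Dehn surgeries on torus knots to compute explicitly when $S^3_r(T(p,q)) \cong -S^3_{r'}(T(p,q))$ holds, by reading off the isomorphism type of the resulting Seifert fibered space from its normalized Seifert invariants. This should reduce to an elementary arithmetic check whose positive solutions correspond exactly to the $(2, 2n+1)$ torus knots, with no overlap with the amphichirality exception since no torus knot is amphichiral. For satellite knots, I would aim to reduce chirally cosmetic surgeries on a satellite $K$ to chirally cosmetic surgeries on its companion, using the behavior of Dehn surgery along incompressible tori in the JSJ decomposition, thereby setting up an induction on JSJ complexity that pushes the problem back to the torus-knot and hyperbolic cases.

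For hyperbolic knots, my strategy is to layer two kinds of obstructions. First, the Varvarezos obstruction from \cite{konstantinos} constrains the admissible slope pairs $(r, r')$ via an explicit polynomial relation in $a_2(K)$, $a_4(K)$, $v_3(K)$, $\det(K)$, and $g(K)$; for generic $K$ this should eliminate all candidate pairs. Second, the surviving candidates must also satisfy Ozsv\'ath--Szab\'o $d$-invariant equalities forced by $S^3_r(K) \cong -S^3_{r'}(K)$, which are computable via the mapping cone formula applied to the knot Floer complex of $K$. The target is to show that this layered obstruction eliminates every pair except those forced by amphichirality, namely $r' = -r$ when $K \cong -K$ as oriented knots.

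The main obstacle, and the reason the conjecture is currently open in full generality, is the case of hyperbolic knots for which the classical invariants trivially satisfy the obstruction --- for instance when $a_2 = a_4 = v_3 = 0$ --- so that one must rely entirely on Heegaard Floer data. A uniform finiteness statement bounding the candidate slope pairs across all such knots is not in reach with current techniques. Overcoming this would likely require new input, such as involutive Heegaard Floer obstructions of Hendricks--Manolescu or an equivariant refinement of the mapping cone formula that respects the orientation-reversing symmetry imposed by a chirally cosmetic pair, and this is the step I would expect to carry the genuine difficulty of the problem.
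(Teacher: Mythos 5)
The statement you are trying to prove is labeled a conjecture in the paper, and the paper does not prove it: the authors only verify a finite computational special case (positive 2-bridge knots up to 31 crossings, via the Varvarezos obstruction of Corollary \ref{obstruction_formula}), and they explicitly leave the general statement open even for positive 2-bridge knots (Conjecture \ref{big_conjecture}). So there is no ``paper's own proof'' to compare against, and your submission should not be read as a proof either --- which, to your credit, you acknowledge in your final paragraph.

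As a research program your geometric trichotomy is sensible and matches how partial results in the literature are actually organized, but each branch has a genuine gap beyond the one you name. In the Seifert-fibered case, Moser's classification does make the torus-knot case an arithmetic problem, and this has in fact been carried out (the $(2,2n+1)$ family is precisely what survives), so that branch is the most defensible. The satellite reduction, however, is not a routine induction: Dehn surgery on a satellite knot does not restrict to a surgery on the companion in any slope-preserving way, and a homeomorphism $S^3_r(K)\cong -S^3_{r'}(K)$ need not respect the JSJ pieces coming from the companion torus a priori; establishing that it does, and controlling how the orientation reversal interacts with the Seifert-fibered pattern piece, is itself a substantial argument that you have only gestured at. For the hyperbolic case, the obstruction of Theorem \ref{v_3 decomp} requires the hypotheses $\tau(K)=g(K)$ and $v_3(K)\neq 0$, which fail for large classes of hyperbolic knots, so ``for generic $K$ this should eliminate all candidate pairs'' overstates what the formula gives you even before you reach the degenerate case $a_2=a_4=v_3=0$; and the $d$-invariant comparison via the mapping cone formula produces, for a fixed knot, infinitely many candidate slope pairs to rule out with no uniform finiteness statement available. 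The honest conclusion is the one you reach yourself: this is a strategy outline for an open problem, not a proof, and the step that would carry the difficulty has not been supplied.
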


For an amphichiral knot $K$, $S^3_r(K) \cong -S^3_{-r}(K)$. Meanwhile, for $(2,2n+1)$ torus knots, there exist chirally cosmetic surgeries along the slopes $\displaystyle{\tfrac{2n^2(2m+1)}{n(2m+1)\pm 1}}$ for each positive integer $m$ (see \cite{ichihara_chirally}). 

\subsection{Knot invariants}
\label{invariants}

In this section we introduce relevant knot invariants for the obstruction formula from \cite{konstantinos}. We will also provide computationally scalable procedures for retrieving these invariants for positive 2-bridge knots.
\begin{definition}
    The {\normalfont genus} ${\normalfont g(K)}$ of a knot $K$ is the least genus of any Seifert surface for $K$. A {\normalfont Seifert surface} for $K$ is an orientable surface with one boundary component that is the knot $K$. 
\end{definition}

\begin{lem}
    \label{genus_from_seifert}
    Let $K$ be an alternating knot and $\Sigma$ be the Seifert surface obtained from a reduced alternating diagram of $K$ via the Seifert algorithm. Let $s$ be the number of Seifert circles in $\Sigma$ and $c$ be the number of crossings in the alternating diagram. Then, $K$ has genus 
    \[
        g(K) = \frac{1+c-s}{2}.
    \] 
\end{lem}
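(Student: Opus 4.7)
The plan is to compute the Euler characteristic of the Seifert surface $\Sigma$ directly from its cell structure and then invoke a classical result identifying its genus with the knot genus.

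By construction, Seifert's algorithm produces $\Sigma$ as a union of $s$ disjoint disks (one per Seifert circle) attached together by $c$ half-twisted bands (one per crossing of the diagram). Each disk deformation retracts to a point and each band deformation retracts to an arc connecting two such points, so $\Sigma$ deformation retracts onto a graph with $s$ vertices and $c$ edges. Hence $\chi(\Sigma) = s - c$. Since the diagram is connected, $\Sigma$ is a compact connected orientable surface (orientability is automatic from Seifert's algorithm) with exactly one boundary component $K$, so
\[ \chi(\Sigma) = 2 - 2g(\Sigma) - 1 = 1 - 2g(\Sigma), \]
and solving gives $g(\Sigma) = (1 + c - s)/2$. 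This part is essentially a bookkeeping exercise, and can be sanity-checked on small examples such as the trefoil ($c = 3$, $s = 2$, $g = 1$) and the figure-eight knot ($c = 4$, $s = 3$, $g = 1$).

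The nontrivial content of the lemma is the identification $g(K) = g(\Sigma)$, i.e.\ that $\Sigma$ has minimal genus among all Seifert surfaces bounded by $K$. Here I would cite the classical Crowell--Murasugi theorem, which guarantees that for a reduced alternating diagram of an alternating knot the Seifert algorithm yields a minimal-genus Seifert surface (Crowell's proof goes via the Alexander polynomial, while Gabai later gave a sutured-manifold proof). Both the \emph{alternating} and the \emph{reduced} hypotheses are essential: dropping either can produce a Seifert surface from the algorithm that is not minimal, which is precisely where the subtlety of the statement lives. Once Crowell--Murasugi is invoked, we obtain $g(K) = g(\Sigma) = (1 + c - s)/2$, completing the proof. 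The main obstacle is thus not the computation but justifying minimality, which we handle purely by appeal to this well-known result.
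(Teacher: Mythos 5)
Your proposal is correct and follows essentially the same route as the paper: compute $\chi(\Sigma)$ from the disk-and-band (graph retract) structure given by Seifert's algorithm, then cite the classical Crowell--Murasugi/Gabai result that a reduced alternating diagram yields a minimal-genus Seifert surface. Your bookkeeping via $\chi(\Sigma)=s-c$ and $\chi=1-2g(\Sigma)$ for a one-boundary-component surface is in fact slightly cleaner than the paper's version of the same computation.
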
 
\begin{proof}
    This result is proved using the Euler characteristic $\chi$ of $\Sigma$. Notice that any Seifert surface of a knot $K$ can be deformation retracted to be a graph with a vertex at each Seifert circle and an edge at each crossing. The Euler characteristic of $\Sigma$ is then given by the number of vertices minus the number of edges plus $1$, since there are two faces but we have one boundary component with boundary $K$. By definition, $\displaystyle{g = \tfrac{2-\chi}{2}}$ and we have $\chi = s-c+1$, thus we get $\displaystyle{g(K) \leq \tfrac{1+c-s}{2}}$. It is well-known that the Seifert algorithm when applied to a reduced alternating projection of a link yields a minimal genus Seifert surface (see \cite{gabai}), so we are done. 
\end{proof}

With Lemma \ref{genus_from_seifert}, we have a closed-form description of the genus of a positive 2-bridge knot. 

\begin{lem} 
\label{genus}
    If $K=C(p, q)$ has the continued fraction decomposition $[a_1,...,a_{2n+1}]$ satisfying Lemma \ref{cond_for_pos}. above, then \(K\) has genus
    \begin{align*}  
        g(K) = \frac{1}{2}\left(\sum\limits_{i=0}^{n} a_{2i+1} -1 \right)
    \end{align*}
\end{lem}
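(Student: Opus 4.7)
The plan is to apply Lemma \ref{genus_from_seifert} to the standard alternating diagram of $K$ shown in Figure \ref{normal_twist_notation}. Every 2-bridge knot is alternating, and under the positivity conditions of Lemma \ref{cond_for_pos} the 4-plat diagram in the figure is reduced, so the lemma applies. The crossing count is immediate: each twist box $a_i$ contributes $a_i$ half-twist crossings, so
\[
c = \sum_{i=1}^{2n+1} a_i.
\]

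The heart of the argument is to show that this diagram has exactly
\[
s = 2 + \sum_{i=1}^{n} a_{2i}
\]
Seifert circles. To verify this I would fix an orientation on $K$ and carry out the Seifert algorithm box by box. The decisive observation is that in the 4-plat geometry of Figure \ref{normal_twist_notation}, the two strands passing through each odd-indexed (horizontal) box are oriented so that the Seifert smoothings leave them as two long non-looping arcs traversing the box, whereas the strands through each even-indexed (vertical) box are antiparallel, and consecutive Seifert smoothings glue together to form small closed circles between consecutive crossings. Tracking how the resulting arcs connect across boxes and close off through the outer bridges of the diagram then gives the claimed total.

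Substituting these counts into Lemma \ref{genus_from_seifert} yields
\[
g(K) = \frac{1+c-s}{2} = \frac{1 + \sum_{i=1}^{2n+1} a_i - 2 - \sum_{i=1}^{n} a_{2i}}{2} = \frac{1}{2}\left(\sum_{i=0}^{n} a_{2i+1} - 1\right),
\]
which is the claimed formula. The main obstacle is the Seifert circle count itself: justifying the orientation claim uniformly across all boxes, and then correctly pairing off the external arcs emerging from each box, requires careful bookkeeping. The cleanest route is induction on $n$: in the base case $n = 0$, the knot $K$ is the $(2, a_1)$ torus knot, which admits the standard closed 2-braid diagram and so has $s = 2$; in the inductive step, one verifies that appending a pair of boxes $(a_{2n}, a_{2n+1})$ of the prescribed form to the continued fraction contributes exactly $a_{2n}$ new Seifert circles, matching the growth predicted by the formula.
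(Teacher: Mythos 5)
Your proposal follows essentially the same route as the paper: apply Lemma \ref{genus_from_seifert} to the standard alternating 4-plat diagram of Figure \ref{normal_twist_notation}, count $c = \sum_{i=1}^{2n+1} a_i$ crossings and $s = 2 + \sum_{i=1}^{n} a_{2i}$ Seifert circles (the paper gets the same total by noting each even box contributes $a_{2i}-1$ small circles and each odd box one circle, plus the outer circle), and substitute. Your suggested induction on $n$ is just a way of organizing the same circle count that the paper does by direct inspection of its figures, so the two arguments are essentially identical.
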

\begin{proof}
    Consider the standard projection of a positive 2-bridge knot as in Figure \ref{normal_twist_notation}, we give the projection an orientation as in Figure \ref{oriented knot}. We will apply the Seifert algorithm to obtain the Seifert surface of $K$ (note that all positive 2-bridge knot diagrams are alternating). First, we will take the oriented resolution (see Figure \ref{oriented_res}) at each crossing to obtain Figure \ref{seifert circles}. Next, we add half-twisted bands in the place of where each crossing used to be, obtaining Figure \ref{seifert surface}. Now, we can count the number of crossings and Seifert circles to get the genus from Lemma \ref{genus_from_seifert}. Clearly, $c = \sum_{i=1}^{2n+1} a_i$. For $s$, we can notice that each upper strand (corresponding to the $a_{2i}$) contributes $a_{2i}-1$ circles, and each lower strand $a_{2i+1}$ contributes only one circle. We group the circle from each lower strand to the next upper strand, with the exception of the last lower strand. This gives $s = \sum_{i=1}^{n} a_{2i} + 1 + 1$ where we have an extra circle from the big circle in the background. Plugging these values in, we get the desired formula.
\end{proof}

\begin{figure}[h]
    \begin{center}
        \includegraphics[scale=0.7]{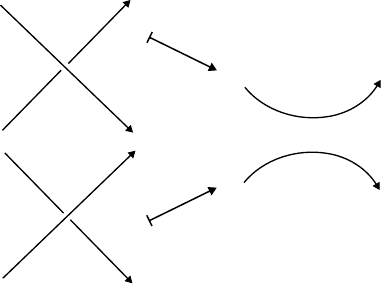}
    \end{center}
    \caption{Oriented resolution of a crossing}
    \label{oriented_res}
\end{figure}
The process of using the Seifert algorithm on a general 2-bridge knot to create a Seifert surface is illustrated in Figures \ref{oriented knot}, \ref{seifert circles}, and \ref{seifert surface} below. 

% fixed
\begin{figure}[h]
    \begin{center}
        \includegraphics[scale=1.2]{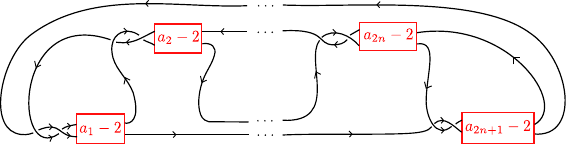}
    \end{center}
    \caption{Oriented knot K before crossing resolutions}
    \label{oriented knot}
\end{figure}
\begin{figure}[h]
    \begin{center}
        \includegraphics[scale=0.9]{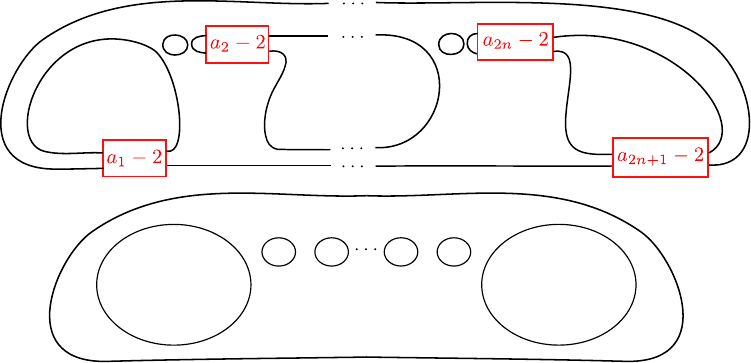}
    \end{center}
    \caption{Resolving crossings and resulting Seifert circles}
    \label{seifert circles}
\end{figure}

\begin{figure}[h]
    \begin{center}
        \includegraphics[scale=1]{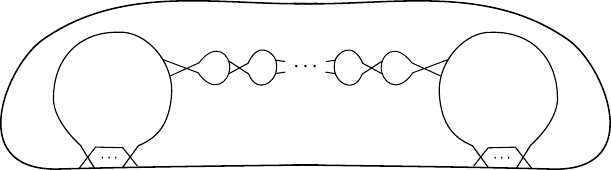}
    \end{center}
    \caption{Final Seifert surface}
    \label{seifert surface}
\end{figure}

\begin{lem}[Section 4, \cite{ichiharawu}]
    \label{lemma 1.7} 
    If $K = C(p, q)$ is a positive two bridge knot with $|p| > |q|$, then there exist $p', q' \in \mathbb{Z}$ such that $C(p',q') = C(p,q)$ and $\frac{p'}{q'}$ has an even continued fraction representation of the form
    \[
        \frac{p'}{q'} = a_1 + \frac{1}{a_2+\frac{1}{\ddots + \frac{1}{a_{2n}}}}  
    \]
    where $n \in \mathbb{N}$ and each of $a_i \neq 0$ is even for $i \in \mathbb{N}$. $p', q'$ can be obtained with the following conditions:
    \begin{enumerate}
        \item $(p', q') = (p, q)$ if $q$ is even
        \item if $q$ is odd, $p' = p$ and $q' = q \pm p$ with the restriction that $|q'| < |p'|$.  
    \end{enumerate}
\end{lem}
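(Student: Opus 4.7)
The plan is to proceed in two stages: first reduce to the case where $q$ is even, then show that any coprime pair $(p, q)$ with $p$ odd and $q$ even admits an even continued fraction. I would begin by recording the fact that since $C(p, q)$ is a knot (not a two-component link), $p$ must be odd; this is forced by the fact that the determinant of a 2-bridge knot equals $|p|$ and must be odd for knots. The case distinction in the lemma is then natural: if $q$ is already even we take $(p', q') = (p, q)$, and if $q$ is odd we observe that since $p$ is odd, $q \pm p$ is even. By Schubert's classification of 2-bridge knots (which we already used implicitly in the remark after Lemma \ref{cond_for_pos}), $C(p, q) = C(p, q \pm p)$ because $q \equiv q \pm p \pmod{p}$. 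We can then pick the sign so that $|q'| < |p'| = |p|$, which is always possible because one of $q + p$, $q - p$ lies in the interval $(-|p|, |p|)$.

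Having reduced to the case $\gcd(p, q) = 1$ with $p$ odd, $q$ even, and $|q| < |p|$, the next step is to run a modified Euclidean algorithm: at each stage, write $r_{k-2} = a_k r_{k-1} + r_k$ where $a_k$ is chosen to be the unique \emph{even} integer for which $|r_k| < |r_{k-1}|$. Such an $a_k$ exists and is unique because within any open interval of length $2$ there is exactly one even integer, so choosing $a_k$ to be the nearest even integer to $r_{k-2}/r_{k-1}$ yields $|r_{k-2}/r_{k-1} - a_k| < 1$. The produced sequence $(a_1, a_2, \dots)$ is then the desired even continued fraction expansion of $p/q$, provided we can show (i) the $a_k$ are nonzero, (ii) the algorithm terminates, and (iii) it terminates after an even number of steps.

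For (i), nonvanishing of $a_k$ follows from the hypothesis $|r_{k-2}| > |r_{k-1}|$ at each step (established by induction from $|p| > |q|$), which forces the nearest even integer to $r_{k-2}/r_{k-1}$ to be at least $2$ in absolute value. For (ii), termination follows because $|r_k|$ is strictly decreasing in the nonnegative integers. For (iii), I would track parities inductively: setting $r_{-1} = p$ (odd), $r_0 = q$ (even), the recursion $r_k = r_{k-2} - a_k r_{k-1}$ with $a_k$ even shows that $r_k \equiv r_{k-2} \pmod{2}$, so the parities alternate as odd, even, odd, even, $\ldots$. Since $\gcd(p, q) = 1$, the algorithm terminates with $r_k = 0$ and $|r_{k-1}| = 1$; because $0$ is even, $k$ must be even, giving a continued fraction of even length $2n$.

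The main obstacle I anticipate is making the "nearest even integer" selection precise and verifying that $|r_k| < |r_{k-1}|$ is preserved throughout, in particular at the very first step where we need $|r_1| < |q|$ given $|p| > |q|$ and $q$ even. This requires noting that the two even integers nearest to $p/q$ differ by $2$, so their remainders $r_k^{(1)}, r_k^{(2)}$ differ by $2|r_{k-1}|$; exactly one of them has absolute value less than $|r_{k-1}|$, which is the one we select. Once this selection is justified, the parity bookkeeping and termination follow cleanly, and the resulting even continued fraction $[a_1, \dots, a_{2n}]$ for $p'/q'$ satisfies all the conditions claimed.
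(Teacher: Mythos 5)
Your proposal is correct and follows essentially the same route as the paper: both run a Euclidean-type algorithm that always selects the even partial quotient lying within distance one of $r_{k-2}/r_{k-1}$ (the paper packages this as an induction on $|q|$, performing two division steps at a time) and use the alternating odd/even parity of the remainders to force nonzero even entries and an even length. You additionally spell out the reduction from $q$ odd to an even $q' = q \pm p$ via $C(p,q) = C(p, q\pm p)$, which the paper simply assumes; the only slight imprecision is the claim that every open interval of length two contains exactly one even integer, which fails only when the quotient is an odd integer --- a case your own parity and coprimality bookkeeping already rules out.
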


\begin{proof}
    To prove this lemma, we assume that we already have the construction that $|p| > |q|$ with $q$ even. We apply induction on $|q|$. 
    
    Base case: if $|q| = 2$, then we have $q = \pm 2$, then because we have $|p| > |q|$ and $p$, $q$ coprime, we must have $\frac{p}{q} = a_1 \pm \frac{1}{2}$, with $a_1 = \lfloor(p/q)\rfloor$ if $\lfloor(p/q)\rfloor$ even, and $a_1 = \lfloor(p/q)\rfloor+1$ otherwise. Since we have $|p| > |q|$, we cannot have $a_1 = 0$, which satisfies the conditions of a even continued fraction defined in the theorem.

    For the inductive step, we suppose that the even continued fraction exists for all such $p, q'$ with $|q'| \le 2n$. Let's consider $p,q$ with $q = 2(n+1)$. Then we rewrite $p = q + h$, where $h \in \mathbb{Z}$. Now we set $a_1$ as the even number in $\lfloor(p/q)\rfloor$ and $\lfloor(p/q)\rfloor+1$, then we can rewrite
    \[
        \frac{q+h}{q} =a_1 - (a_1 - \frac{q+h}{q})
    \]
    \[ 
        \frac{q+h}{q} = a_1 - \frac{1}{\frac{q}{(a_1 - 1)q-h}}
    \]
    It is crucial to realize that due to the choice of $a_1$, we have ensured that $|q| > |(a_1 - 1)q-h|$. Denote $q_1 = (a_1 - 1)q-h$. Also, because $q$ is even and $h$ is odd, we must have $q_1$ odd. Then we apply a similar procedure on $\frac{q}{(a_1 - 1)q-h}$. We have
    \[
    \frac{q+h}{q} = a_1 - \frac{1}{\frac{q}{q_1}} = a_1 - \frac{1}{a_2 - \frac{a_2q_1 - q}{q_1}} = a_1 - \frac{1}{a_2 - \frac{1}{\frac{q_1}{a_2q_1 - q}}}
    \]
    At this stage, if $a_2q_1 - q = 0$, then we are done. If otherwise, once again, we must have $|q_1| > |a_2q_1 - q|$, and also because $q_1$ odd, $a_2$ even, and q even, we have $a_2q_1 - q$ even. Then remember that $|q| > |q_1| > |a_2q_1 - q|$. Then by the inductive hypothesis, $\frac{q_1}{a_2q_1 - q}$ has an even continued fraction. Thus $\frac{p}{q}$ has an even continued fraction. 
\end{proof}

With this even continued fraction decomposition, we have another alternative way to obtain the genus of a positive 2-bridge knot.
\begin{thm}[\S 4, \cite{ichiharawu}]
    If K = C(p,q) and $K$ has the even continued fraction representation $[2b_1, 2c_1, \dots, 2b_n, 2c_n]$ as constructed in Lemma \ref{lemma 1.7}, then $g(K) = n$, where $n$ is half the length of the even continued fraction representation. 
\end{thm}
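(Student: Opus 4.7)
The plan is to apply the Seifert algorithm to a standard 4-plat diagram of $K$ built from the even continued fraction representation, and then to use Lemma \ref{genus_from_seifert} to read off the genus.

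First, I would draw the standard 4-plat diagram $D$ of $K$ with $2n$ twist boxes carrying the entries $2b_1, 2c_1, \ldots, 2b_n, 2c_n$, analogous to Figure \ref{normal_twist_notation} but of even length. Since every entry is a nonzero even integer, each twist region consists of an integer number of full twists on two adjacent strands. I would orient $D$ compatibly with $K$ and apply the Seifert algorithm. A local count at each twist region should yield $c = \sum_{i=1}^{n}(|2b_i| + |2c_i|)$ crossings and exactly $s = c - 2n + 1$ Seifert circles, so the Euler characteristic formula in Lemma \ref{genus_from_seifert} produces the upper bound $g(K) \le (1 + c - s)/2 = n$.

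For the matching lower bound, I would appeal to Lemma \ref{genus}, which already expresses $g(K) = \tfrac{1}{2}(\sum_{i=0}^{k} a_{2i+1} - 1)$ via the positive odd continued fraction $[a_1, \ldots, a_{2k+1}]$. Given Lemma \ref{genus} and the upper bound above, the theorem reduces to the combinatorial identity $\sum_{i=0}^{k} a_{2i+1} = 2n + 1$ relating the two continued fraction representations of the same rational number $p/q$. This identity can be proved by induction on $n$, using the elementary equivalences of continued fractions and the construction in Lemma \ref{lemma 1.7} to convert between the two representations one entry at a time while simultaneously tracking the odd-position sum in the positive CF and the length of the even CF.

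The main obstacle I anticipate is the Seifert circle count in the first step: its value depends on the signs of the $b_i$ and $c_i$, and since the even continued fraction produced by Lemma \ref{lemma 1.7} typically has mixed signs, the oriented resolution at a twist box behaves differently depending on whether adjacent entries share a sign. A uniform treatment likely requires either a careful case analysis on sign patterns or a reduction via planar isotopy to a normal form with a consistent sign. If the geometric count proves delicate, the lower-bound route via the combinatorial identity, which is sign-agnostic because it manipulates the two CF representations abstractly, offers a viable backup.
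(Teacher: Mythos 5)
The paper does not actually prove this statement---it is imported verbatim from \S 4 of \cite{ichiharawu} as a citation---so there is no in-paper argument to compare yours against; I can only assess your outline on its own terms.

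The structure you propose is workable in principle, but the load-bearing step is deferred rather than proved. The Seifert-algorithm computation on the even-CF 4-plat only ever yields the upper bound $g(K)\le n$ (and, as you yourself flag, even the count $s=c-2n+1$ requires a sign-by-sign analysis, since the mixed-sign even CF gives a non-alternating diagram and the equality clause of Lemma \ref{genus_from_seifert} is unavailable). The entire theorem therefore rests on the combinatorial identity $\sum_{i} a_{2i+1}=2n+1$ relating the positive odd continued fraction to the all-even one; given Lemma \ref{genus}, that identity is \emph{equivalent} to the theorem, so asserting that it ``can be proved by induction\ldots one entry at a time'' without carrying out the induction leaves the proof without its core. (Note also that once the identity is established, Lemma \ref{genus} gives $g(K)=n$ outright and your upper-bound step becomes redundant; conversely, without the identity the upper bound proves nothing.) A cleaner way to close the gap, independent of CF bookkeeping, is to obtain the matching lower bound from the Alexander polynomial: the even continued fraction exhibits the Seifert surface as a plumbing of $2n$ twisted annuli, whose $2n\times 2n$ Seifert matrix has determinant $\pm\prod_i b_i c_i\neq 0$, so $\deg\Delta_K=2n$ and hence $g(K)\ge \tfrac12\deg\Delta_K=n$. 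I would recommend either carrying out the CF induction explicitly or substituting this Alexander-polynomial lower bound before treating the argument as complete.
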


\begin{definition}
    The {\normalfont Jones polynomial} of any link $L$ can be characterized by the following skein relation:
    \[
        t^{-1}V(L_+) - tV(L_-) + (t^{-1/2} - t^{1/2})V(L_0) = 0
    \]
    where $L_-, L_+$, and $L_0$ represents the three ways of resolving a particular crossing for $L$ (see Figure \ref{skein}). We normalize $V(U) = 1$, where $U$ is the unknot.
\end{definition}
\begin{figure}[h]
    \begin{center}
        \includegraphics[scale=0.7]{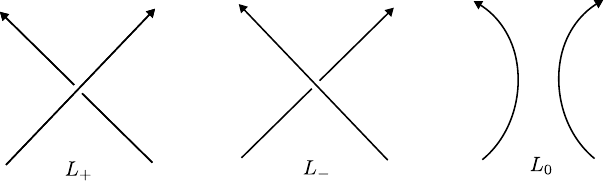}
    \end{center}
    \caption{\(L_+\), \(L_-\), and \(L_0\)}
    \label{skein}
\end{figure}

\begin{thm} [Proposition 4.4, \cite{ichiharawu}]
    If $K = C(p, q)$ and $K$ has the even continued fraction representation $[2b_1, 2c_1, \dots, 2b_n, 2c_n]$ as constructed in Lemma \ref{lemma 1.7}, then we can calculate knot invariant $v_3$ as below.
    \[
        v_3(K) = \frac{1}{2}\left(\sum\limits_{k = 1}^{n} c_k (\sum\limits_{i = 1}^k b_i)^2 - \sum\limits_{k = 1}^{n} b_k (\sum\limits_{i = k}^n c_i)^2 \right) 
    \]
    Alternatively, we can calculate \(v_3\) directly from the Jones polynomial with the following formula.
    \[
        v_3(K) = -\frac{1}{36}V_K'''(1) - \frac{1}{12}V_K''(1).
    \]
\end{thm}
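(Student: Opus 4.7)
The plan is to prove the combinatorial formula for \(v_3\) by induction on \(n\), the half-length of the even continued fraction representation, taking the derivative expression \(v_3(K) = -\tfrac{1}{36}V_K'''(1) - \tfrac{1}{12}V_K''(1)\) as the working definition of the invariant. This derivative formula is the standard way to extract the primitive order-3 Vassiliev invariant from the Jones polynomial, so the real content of the theorem is the combinatorial identity on the left. Throughout, I would write \(B_k := \sum_{i=1}^{k} b_i\) and \(C_k := \sum_{i=k}^{n} c_i\) for the partial sums appearing in the formula, so that the target expression reads \(\tfrac{1}{2}\bigl(\sum_k c_k B_k^2 - \sum_k b_k C_k^2\bigr)\).

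For the base case \(n = 1\), the knot \(C(p,q)\) with even continued fraction \([2b_1, 2c_1]\) has a Jones polynomial computable in closed form, either by iteratively applying the skein relation through the two twist regions or via the Kauffman-bracket tangle calculus for rational tangles. Substituting the resulting \(V_K''(1)\) and \(V_K'''(1)\) into the derivative formula should yield \(\tfrac{1}{2}(c_1 b_1^2 - b_1 c_1^2) = \tfrac{1}{2} b_1 c_1 (b_1 - c_1)\), matching the claimed combinatorial expression at \(n = 1\).

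For the inductive step, I would apply the Jones polynomial skein relation inside the final twist region (the one corresponding to \(2c_n\)), which on positive 2-bridge knots yields a standard two-term recursion that decrements \(c_n\) and produces a 2-bridge link \(L\) of strictly smaller complexity via the oriented resolution. Differentiating this recursion two and three times at \(t = 1\) and combining with the inductive hypothesis for \(v_3(K')\), where \(K'\) is the knot associated to \([2b_1, 2c_1, \dots, 2b_n, 2(c_n - 1)]\), expresses \(v_3(K)\) in the form
\[
    v_3(K) = v_3(K') + \Phi\bigl(b_\bullet, c_\bullet, V_L^{(j)}(1)\bigr).
\]
The task is then to show that \(\Phi\) matches the finite difference of the combinatorial formula in \(c_n\), which by direct expansion equals \(\tfrac{1}{2}\bigl(B_n^2 - 2\sum_k b_k C_k - B_n\bigr)\).

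The main obstacle will be the combinatorial bookkeeping of this matching. Via the identity \(V''(1) = -6\, a_2\) (with the appropriate adjustment in the link case), the contribution from the resolved link \(L\) reduces to its Conway coefficient \(a_2(L)\), which itself admits a closed-form expression in terms of a truncated continued fraction. The technical heart of the argument is thus to identify \(a_2(L)\) with the specific sum of products of partial sums \(B_k\) and \(C_k\) needed for \(\Phi\) to coincide with the discrete derivative of the combinatorial formula. Separate care is needed for the degenerate case \(c_n = 1\), when decrementing causes the continued fraction to shrink by two entries, and for tracking the sign conventions on the crossings and orientation at the boundary of each twist region throughout the recursion.
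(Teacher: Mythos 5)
The paper does not actually prove this statement: it is quoted as Proposition~4.4 of the cited Ichihara--Wu paper, so there is no in-paper argument to compare yours against. Your skein-theoretic induction is the natural way to reconstruct such a proof, and the overall architecture --- take the derivative formula as the working definition of $v_3$, verify the double-twist base case $\tfrac{1}{2}b_1c_1(b_1-c_1)$, and match a crossing-change difference against the discrete derivative of the combinatorial expression --- is sound in outline.

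However, the step you defer as ``the technical heart'' is genuinely missing, and as phrased it would fail. The oriented resolution $L=L_0$ of a crossing inside a twist region of a knot is a two-component link, whose Conway polynomial is odd, $\nabla_L(z) = \ell z + a_3 z^3 + \cdots$ with $\ell$ the linking number; there is no coefficient $a_2(L)$ to identify with anything, and the identity $V''(1) = -6a_2$ has no direct analogue for $L$. What the thrice-differentiated skein relation actually requires is $V_L(1)$, $V_L'(1)$, and $V_L''(1)$, which by the known evaluations of Jones derivatives at $t=1$ are controlled by $\ell$, the cubic Conway coefficient $a_3(L)$, and the $a_2$'s of the components --- equivalently, by the standard crossing-change formula for a degree-$3$ Vassiliev invariant, whose right-hand side involves $\ell$ and $a_2$ of the adjacent knot. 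Producing closed forms for these quantities in terms of the truncated continued fraction is essentially the entire content of the proposition, and it is not carried out. Two smaller problems: the induction cannot literally be ``on $n$,'' since decrementing $c_n$ preserves $n$; you must induct on the total crossing number $2\sum_i (b_i + c_i)$ or lexicographically, with the collapse $[2b_1,\dots,2b_n,0] = [2b_1,\dots,2c_{n-1}]$ handled at the bottom. And your finite difference has a sign slip: replacing $c_n$ by $c_n - 1$ changes every $C_k$, which gives $v_3(K) - v_3(K') = \tfrac{1}{2}\bigl(B_n^2 - 2\sum_k b_k C_k + B_n\bigr)$, with $+B_n$ rather than $-B_n$ in the last term (modulo which of $L_{\pm}$ you designate as $K$).
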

In Section \ref{section 3}, we explain that our program always verifies that \(v_3 \neq 0\) before checking the obstruction formula given in Corollary \ref{obstruction_formula}. 

\begin{definition}
    \label{conway_skein}
    The {\normalfont Alexander polynomial} of any link $L$ can be characterized by the following skein relation: 

    \[ 
        \Delta(L_+) - \Delta(L_-) = (t^{1/2} - t^{-1/2})\Delta(L_0)
    \]
    with $\Delta(U) = 1$, where $U$ is the unknot. One can then find the {\normalfont Conway polynomial} of any link L by making the substitution $\Delta_L(t^2) = \nabla_L(t - t^{-1})$. The skein relation follows accordingly:
    \[
        \nabla(L_+) - \nabla(L_-) = z\nabla(L_0).
    \]
    See Figure \ref{skein} for \(L_+\), \(L_-\), and \(L_0\).
\end{definition}

For an alternative way to calculate the Alexander polynomial, see Appendix \ref{seifert_form}. Due to the higher time complexity of calculating determinants of large matrices, the recursive relation was chosen to be implemented in the code. 

From the Conway polynomial, we can extract a few more important invariants. 
\begin{definition}
    The $a_2$ and $a_4$ invariants of a knot $K$ are the coefficients of the $z^2$ and $z^4$ terms respectively in $\nabla_K(z)$. 
\end{definition}

\begin{definition}
    The {\normalfont determinant} of a knot is given by 
    \[
    \det(K) = |\Delta_K(-1)| = |\nabla_K(2i)|
    \]
    Equivalently, we can get the determinant of $K$ as the absolute value of substituting $z^2 = -4$ in $\nabla_K(z)$. 
\end{definition}

\subsection{Obstruction formula}
In this section, we will derive an obstruction formula to the existence of chirally cosmetic surgeries which follows the work Vavarezos in \cite{konstantinos}. Although the actual content of some of these formulas requires background in Heegaard Floer homology, we can state the main results without issue. 

\begin{thm}[2.8, \cite{konstantinos}]
\label{v_3 decomp}
    Let $K$ be knot satisfying $\tau(K) = g(K)$, and $v_3(K)\neq0$. If $S_{p/q}^3(K)\cong -S_{p/q'}^3(K)$ for some $p, q > 0, q'<0$ then
    \[
        v_3(K)(Vert(K) + 2g(K) - 1) = 7a_2(K)^2 - a_2(K) - 10a_4(K)
    \]
\end{thm}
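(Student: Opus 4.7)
The plan is to combine two obstructions coming from finite-type and correction-term invariants of $S^3_{p/q}(K)$: first, the Casson--Walker invariant $\lambda$, which flips sign under orientation reversal and gives a linear relation in $a_2(K)$; and second, a next-order invariant built from the Heegaard Floer $d$-invariants, which gives a quadratic relation involving $a_2^2$, $a_4$, and $v_3$. The hypothesis $\tau(K)=g(K)$ is the essential ingredient that makes the $d$-invariant side tractable: it forces the ``staircase'' summand of $CFK^\infty(K)$ into a normal form whose Ni--Wu $V_i$-sequence is supported on $0 \le i \le g(K)$ and whose cumulative mass is what $\mathrm{Vert}(K)$ records.

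First I would unpack the Casson--Walker relation. Using the surgery formula $\lambda(S^3_{p/q}(K))=(q/p)\,a_2(K)+\lambda(L(p,q))$, together with $\lambda(-M)=-\lambda(M)$, the chirally cosmetic hypothesis becomes a linear equation in $a_2(K)$ with coefficient $(q+q')/p$, plus a Dedekind-sum contribution coming from the two lens spaces. The sign hypothesis $q>0>q'$ and the mod-$p$ constraint forced by the underlying homeomorphism cut this down to a usable identity in $a_2$, $p$, $q$, $q'$ that will be plugged into the second step.

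Next I would carry out the analogous computation for the second Ohtsuki/Casson--Walker--Lescop coefficient, whose surgery formula on $S^3_{p/q}(K)$ is a known polynomial in $a_2$, $a_4$, $v_3$, and the slope data. Equating its value on $S^3_{p/q}(K)$ with the negative of its value on $S^3_{p/q'}(K)$, and using the $a_2$-relation from the Casson step to eliminate the slope dependence, should produce the combination $7a_2(K)^2 - a_2(K) - 10a_4(K)$ on the right-hand side. In parallel, the left-hand side is obtained from the sum identity $\sum_{\mathfrak{s}} d(S^3_{p/q}(K),\mathfrak{s}) = -\sum_{\mathfrak{s}} d(S^3_{p/q'}(K),\mathfrak{s})$, which under the Ni--Wu surgery formula and the hypothesis $\tau(K)=g(K)$ telescopes into an expression linear in $\mathrm{Vert}(K)+2g(K)-1$; the prefactor $v_3(K)$ appears as the coefficient of this combination in the higher-order surgery expansion.

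The main obstacle I anticipate is pinning down the precise integer coefficients $7$, $1$, and $10$ on the right-hand side. These arise from the polynomial expansion of the second Ohtsuki invariant together with the squaring of the Casson surgery contribution, and the bookkeeping of cross-terms between $a_2^2$ and $a_4$ is delicate. A secondary subtlety is verifying that the $p, q, q'$ dependence fully cancels once the Casson-level identity is substituted in, using only the mod-$p$ chirally cosmetic constraint; the hypothesis $v_3(K)\neq 0$ enters at the very end to guarantee that the resulting equation is nontrivial rather than a tautology, which is why it is included as a standing assumption.
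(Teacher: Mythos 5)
A preliminary remark: the paper itself does not prove this statement at all --- Theorem \ref{v_3 decomp} is imported verbatim from Varvarezos \cite{konstantinos} --- so the only meaningful comparison is with the argument in that source. Measured against that, your overall architecture is essentially the right one: the identity is obtained there by playing finite-type surgery formulas (the Casson--Walker invariant, linear in $a_2$ with lens-space/Dedekind-sum corrections, and a second-order invariant which is where the combination $7a_2^2-a_2-10a_4$ and the factor $v_3$ originate, in the spirit of Ichihara--Ito--Saito) against the sum of Heegaard Floer correction terms computed via the Ni--Wu/Ozsv\'ath--Szab\'o formula, with $\tau(K)=g(K)$ controlling the relevant $V_i$-sequence and producing the quantity $Vert(K)$, after which the slope data $p,q,q'$ is eliminated between the relations.

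The gap is that what you have written is a plan, not a proof: every step that actually produces the theorem is deferred. You never state the surgery formula for the second Ohtsuki/Casson--Walker--Lescop coefficient, never carry out the elimination of $p$, $q$, $q'$, and you explicitly flag the coefficients $7$, $1$, $10$ --- i.e.\ the entire right-hand side --- as an unresolved obstacle; nothing in the sketch verifies that the slope dependence and the Dedekind-sum contributions genuinely cancel. Moreover, one mechanism you commit to cannot work as described: $v_3$ cannot ``appear as the coefficient'' in the $d$-invariant surgery expansion, since the Ni--Wu correction-term formula involves only the integers $V_i$, $H_i$ determined by the knot Floer complex and is blind to $v_3$; in the actual derivation $v_3$ enters from the degree-three finite-type side, and the hypothesis $v_3\neq 0$ is used in the middle of the elimination (one divides by $v_3$ to solve for the slope data), not at the end to rule out a tautology. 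So the proposal correctly identifies the ingredients of the argument in \cite{konstantinos}, but as it stands it does not establish the stated identity.
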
 
$Vert(K)$ and $\tau(K)$ are modern knot invariants derived from Heegaard Floer homology, but as they are not used in the final form of the obstruction formula, we will not discuss them further. See \cite{heegaard_floer} and \cite{manolescu} for precise definitions.  

\begin{rmk}
 Notice that in Theorem \ref{v_3 decomp} we are only considering the cases of $p,q >0,$ $q' < 0$. This is because for any non-$L$-space\footnote{An $L$-space knot is a knot $K$ such that $S^3_r(K)$ is an $L$-space for some $r\in \mathbb{Q}_+$. For more details, see \cite{ozvath}.}  knot K, if $S_r^3(K) = \pm S_{r'}^3(K)$, then $r, r'$ must have different signs \cite{konstantinos}. 
\end{rmk}

\begin{lem}[3.5, \cite{konstantinos}]
\label{obstruction for alternating knot}
    If K is an alternating knot with $\tau(K) > 0$, then
    \[
        Vert(K) = \frac{1}{2}\det(K) + \tau(K) - \frac{3}{2}
    \]
\end{lem}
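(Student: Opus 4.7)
The plan is to exploit the very rigid structure of the knot Floer complex for alternating knots and then to perform a combinatorial count of the ``vertical'' generators that define $\text{Vert}(K)$. I would proceed in three stages.

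First, I would invoke the thinness theorem of Ozsv\'ath--Szab\'o for alternating knots: $\widehat{HFK}(K)$ is supported on a single $\delta$-grading diagonal, and its total rank equals $\det(K)$. Because of this thinness, $CFK^\infty(K)$ decomposes (up to filtered chain homotopy equivalence) as a direct sum of a single ``staircase'' summand together with a collection of width-one ``square'' or ``box'' summands. The staircase summand is the one detected by $\tau$: its height (half-width) along the vertical axis is exactly $\tau(K)$, and under the assumption $\tau(K) > 0$, this staircase is nondegenerate and sits in the upper-left quadrant of the filtration plane.

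Second, I would unpack the definition of $\text{Vert}(K)$ from \cite{konstantinos} and observe that it is additive under the above direct-sum decomposition. It therefore suffices to compute the vertex contribution of each summand separately. A staircase of height $\tau$ contributes a controlled number of vertices (one per horizontal step in its reduced model), while each box summand contributes exactly one vertex. Writing $\text{Vert}(K)$ as the sum of these local contributions gives
\[
\text{Vert}(K) = (\text{boxes}) + (\text{staircase vertices}).
\]
The number of box summands can in turn be read off from the total rank: since the staircase accounts for $2\tau(K)+1$ generators of $\widehat{HFK}(K)$, the remaining generators $\det(K) - 2\tau(K) - 1$ come in symmetric pairs that assemble into $\tfrac{1}{2}\bigl(\det(K) - 2\tau(K) - 1\bigr)$ boxes.

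Third, I would combine these counts and simplify. Adding the box contribution $\tfrac{1}{2}(\det(K) - 2\tau(K) - 1)$ to the staircase contribution $2\tau(K) - \tfrac{1}{2}$ (the accounting that correctly records each step of the staircase once and corrects for the shared apex) telescopes precisely into $\tfrac{1}{2}\det(K) + \tau(K) - \tfrac{3}{2}$. The main obstacle I anticipate is bookkeeping at this final step: pinning down exactly how many vertices the staircase model contributes, and being careful about the shared generator at the apex where the vertical and horizontal arrows meet. The hypothesis $\tau(K) > 0$ is essential here, since it selects which half of the symmetric staircase is dominant and thus fixes the sign conventions that make the $-\tfrac{3}{2}$ correction consistent across all alternating knots.
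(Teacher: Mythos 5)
First, note that the paper does not prove this statement at all: it is quoted verbatim as Lemma 3.5 of \cite{konstantinos}, so there is no internal argument to compare yours against. Your outline does follow the strategy one would expect to underlie the cited proof: Ozsv\'ath--Szab\'o thinness for alternating knots, the decomposition of a thin $CFK^\infty$ into a single staircase plus boxes (a result of Petkova, which you should attribute), additivity of $\mathrm{Vert}$ over summands, and a local count on each summand. That skeleton is reasonable.

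However, the substance is missing, and what is present is incorrect. First, you never actually state the definition of $\mathrm{Vert}(K)$; every number in stages two and three (``one vertex per box,'' the staircase contributing $2\tau(K)-\tfrac{1}{2}$) is asserted rather than derived, and these assertions are exactly where the content of the lemma lives --- a half-integer ``vertex count'' for a staircase should already signal that something has not been pinned down. Second, your box count is internally inconsistent: a box summand of $CFK^\infty$ has four generators, so the $\det(K)-2\tau(K)-1$ generators outside the staircase assemble into $\tfrac{1}{4}\bigl(\det(K)-2\tau(K)-1\bigr)$ boxes, not $\tfrac{1}{2}\bigl(\det(K)-2\tau(K)-1\bigr)$. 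Third, even taking your stated contributions at face value, the arithmetic fails: $\tfrac{1}{2}\bigl(\det(K)-2\tau(K)-1\bigr)+2\tau(K)-\tfrac{1}{2}=\tfrac{1}{2}\det(K)+\tau(K)-1$, which is not the claimed $\tfrac{1}{2}\det(K)+\tau(K)-\tfrac{3}{2}$, so the ``telescoping'' in stage three does not produce the formula of the lemma. To repair the argument you would need to quote the actual definition of $\mathrm{Vert}$ from \cite{konstantinos}, verify that it is invariant under filtered chain homotopy equivalence and additive over direct summands (this is not automatic for every Floer-theoretic quantity), and then redo both local counts honestly. As written, the proposal does not establish the lemma.
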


We now have the necessary tools to modify the obstruction formula given in Theorem \ref{v_3 decomp} to only include the classical knot invariants discussed in Section \ref{invariants}. The resulting formula given in Corollary \ref{obstruction_formula} below is the final version of the obstruction formula we will use in our program. 
\begin{cor}
    \label{obstruction_formula}
    Let $K$ be a a positive 2-bridge knot which is also not a $(2,2n+1)$-torus knot. If $K$ admits a chirally cosmetic surgery, then
    \[
        v_3\left(\frac{1}{2} \det(K) + 3g - \frac{5}{2}\right) = 7a_2^2 - a_2 - 10a_4   
    \]
\end{cor}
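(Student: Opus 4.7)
The plan is to derive the corollary by combining Theorem \ref{v_3 decomp} with Lemma \ref{obstruction for alternating knot} via direct substitution, after verifying that the hypotheses of both results apply to any positive 2-bridge knot (other than a $(2,2n+1)$ torus knot) admitting a chirally cosmetic surgery.

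First I would verify the hypotheses of Theorem \ref{v_3 decomp}. Since $K$ is a positive 2-bridge knot, the footnote to Theorem \ref{3.1} tells us that $K$ is strongly quasipositive, and for strongly quasipositive knots a standard result gives $\tau(K) = g(K)$. The assumption $v_3(K) \neq 0$ is not a feature of the knot class itself but rather a condition that must be checked; as noted in the paragraph following Theorem \ref{v_3 decomp}, the Python program explicitly verifies $v_3 \neq 0$ before applying the formula, so we may assume it here. Next I would verify the hypotheses of Lemma \ref{obstruction for alternating knot}: every 2-bridge knot is alternating (this is visible from the standard projection in Figure \ref{normal_twist_notation}), and $\tau(K) = g(K) > 0$ for any nontrivial knot, in particular any positive 2-bridge knot distinct from the unknot.

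With the hypotheses verified, I would apply Theorem \ref{v_3 decomp} to obtain
\[
    v_3(K)\bigl(\V(K) + 2g(K) - 1\bigr) = 7a_2(K)^2 - a_2(K) - 10a_4(K),
\]
and then substitute the expression for $\V(K)$ from Lemma \ref{obstruction for alternating knot}, namely $\V(K) = \tfrac12 \det(K) + \tau(K) - \tfrac32$. Using $\tau(K) = g(K)$, the quantity inside the parentheses simplifies as
\[
    \V(K) + 2g(K) - 1 = \tfrac12\det(K) + g(K) - \tfrac32 + 2g(K) - 1 = \tfrac12\det(K) + 3g(K) - \tfrac52,
\]
which yields the desired identity.

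The only genuinely nontrivial step is citing $\tau(K) = g(K)$ for positive 2-bridge knots; everything else is bookkeeping. This equality is standard for strongly quasipositive knots (it follows from the combination of Livingston's bound $\tau \le g_4$ and the fact that strongly quasipositive knots realize their Seifert genus as their slice genus), and the footnote to Theorem \ref{3.1} justifies applying it in our setting. Once this is in hand, the corollary is an immediate algebraic substitution.
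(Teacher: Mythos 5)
Your substitution of Lemma \ref{obstruction for alternating knot} into Theorem \ref{v_3 decomp}, and your verification that $\tau(K) = g(K)$ (via strong quasipositivity, equivalent to the paper's citation of \cite{hedden}) and that $\tau(K) > 0$ with $K$ alternating, all match the paper's argument. But there are two genuine gaps. First, you treat $v_3(K) \neq 0$ as an assumption to be checked computationally, yet the corollary as stated carries no such hypothesis, so a proof of it must establish $v_3 \neq 0$ for the whole knot class; the paper does this by citing Stoimenow's bound that a knot with a positive diagram with $c$ crossings satisfies $v_3(K) > c$, so $v_3 \neq 0$ for every nontrivial positive knot, in particular every positive 2-bridge knot. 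Deferring to the program proves a weaker, conditional statement.

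Second, and more seriously, your argument never uses the hypothesis that $K$ is not a $(2,2n+1)$ torus knot --- a red flag, since those knots genuinely do admit chirally cosmetic surgeries, so no correct proof can avoid using the exclusion. The issue is the mismatch between the corollary's hypothesis (``$K$ admits a chirally cosmetic surgery,'' i.e.\ $S^3_r(K) \cong -S^3_{r'}(K)$ for some distinct slopes $r, r'$) and the hypothesis of Theorem \ref{v_3 decomp}, which requires slopes of the specific form $p/q$ and $p/q'$ with $p, q > 0$ and $q' < 0$. The bridge is the remark following Theorem \ref{v_3 decomp}: for a non-$L$-space knot, chirally cosmetic slopes must have opposite signs (and homological considerations force the common numerator $p$). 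This is exactly where the paper invokes the classification from \cite{l_space} that the only 2-bridge $L$-space knots are the $(2,2n+1)$ torus knots, so that excluding them makes $K$ a non-$L$-space knot and puts the surgery slopes into the form Theorem \ref{v_3 decomp} demands. Without this step your substitution is applied to a theorem whose hypotheses you have not shown to hold.
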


\begin{proof}
    We first show that positive 2-bridge knots satisfy the conditions necessary for Theorem \ref{v_3 decomp} and Lemma \ref{obstruction for alternating knot}. From 1.8 in \cite{hedden}, we see that positive knots satisfy $\tau(K) = g(K)$. For knots with a positive diagram with $c$ crossings, we have $v_3(K) > c$ from \cite{stoimenow}. Since the unknot does not have two bridges, we have that $v_3(K) \neq 0$ for any positive two bridge knots. Next, from \cite{l_space}, we know that the only rational knots which are also L-space knots are exactly those which are $(2,2n+1)-$torus knots, so upon excluding this subfamily, the condition for Lemma \ref{obstruction for alternating knot} is also satisfied. The result is then immediate when substituting Lemma \ref{obstruction for alternating knot} into Theorem \ref{v_3 decomp}. 
\end{proof}

We are interested in the contrapositive of Corollary \ref{obstruction_formula}, which is given in Corollary \ref{2.18} below. 

\begin{cor}
    Let $K$ be a knot as in Corollary \ref{obstruction_formula} and with up to 31 crossings. If $K$ satisfies
    \begin{align*}
        v_3\left(\frac{1}{2} \det(K) + 3g - \frac{5}{2}\right) \neq 7a_2^2 - a_2 - 10a_4
    \end{align*} 
    from Theorem \ref{obstruction_formula}, then $K$ does not admit chirally cosmetic surgeries.
    \label{2.18}
\end{cor}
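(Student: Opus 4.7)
The proof plan is essentially a one-line argument: Corollary \ref{2.18} is precisely the contrapositive of Corollary \ref{obstruction_formula}, so the entire content of the statement is packaged in that earlier corollary. Concretely, I would begin by recalling Corollary \ref{obstruction_formula}, which asserts the implication
\[
K \text{ admits a chirally cosmetic surgery} \;\Longrightarrow\; v_3\!\left(\tfrac{1}{2}\det(K) + 3g - \tfrac{5}{2}\right) = 7a_2^2 - a_2 - 10a_4,
\]
under the standing hypotheses that $K$ is a positive 2-bridge knot and not a $(2,2n+1)$-torus knot. Negating both sides yields exactly the desired statement: if the obstruction equation fails, then $K$ cannot admit a chirally cosmetic surgery.

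The only thing to verify carefully is that the hypotheses transfer cleanly. The phrase "$K$ as in Corollary \ref{obstruction_formula}" bundles in (i) positivity, (ii) 2-bridge, and (iii) the exclusion of $(2,2n+1)$-torus knots. These are precisely the conditions used in the proof of Corollary \ref{obstruction_formula} to invoke Theorem \ref{v_3 decomp} (via positivity giving $\tau = g$ and a positive diagram forcing $v_3 \neq 0$) and Lemma \ref{obstruction for alternating knot} (via alternating-ness of 2-bridge diagrams and the L-space classification from \cite{l_space}). Since every one of these conditions is preserved under contrapositive, no additional verification is needed.

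I would also remark on the role of the "up to 31 crossings" hypothesis: from a purely logical standpoint, the implication holds for all positive 2-bridge knots (excluding the $(2,2n+1)$-torus family) regardless of crossing number. The bound of 31 is not mathematically necessary for the contrapositive — it is inherited from the computational regime in which our Python program actually evaluates the invariants $v_3$, $\det$, $g$, $a_2$, $a_4$ and certifies the inequality numerically. Thus there is no genuine obstacle: the "hard part" was already carried out in establishing Corollary \ref{obstruction_formula}, and Corollary \ref{2.18} simply repackages that result in the form directly consumed by the verification procedure in Section \ref{section 3}.
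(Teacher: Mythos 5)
Your proposal is correct and matches the paper exactly: the paper offers no separate proof of Corollary \ref{2.18}, introducing it only as ``the contrapositive of Corollary \ref{obstruction_formula},'' which is precisely your one-line argument. Your added observation that the 31-crossing bound plays no logical role in the contrapositive and merely delimits the computational verification is also consistent with how the paper uses the statement.
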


\section{Python code}
In this section we provide a brief overview of the Python implementation we used for validating the main result by checking the obstruction formula in Corollary \ref{2.18}. For code resources, consult \\
\href{https://github.com/zl830/chiral_cosmetic_surgery_for_pos_2_bridge_knots}{https://github.com/zl830/chiral\_cosmetic\_surgery\_for\_pos\_2\_bridge\_knots}.
\label{section 3}

\subsection{High level overview}
The code can be summarized as follows:
\begin{enumerate}
    \item Input a maximum number of crossings.
    \item Generate every possible positive 2-bridge knot up to said maximum number of crossings. Each knot is represented as a list $[a_1,...,a_m]$ which corresponds to the continued fraction of $p/q$. 
    \item For each presentation of a knot, calculate the Conway polynomial via a recursive skein relation to get $a_2$ and $a_4$.
    \item Calculate the determinant of the knot by substituting $z^2 = -4$ in the Conway polynomial. 
    \item Find the even continued fraction representation to get genus and $v_3$.
    \item Check if the obstruction formula is satisfied. 
    \item Loop through each knot up to maximum number of crossings. 
\end{enumerate}

\subsection{Skein relation recursion}
To compute the Conway polynomial, recall that a positive two-bridge knot can be arranged as in Figure \ref{normal_twist_notation}. We untangle the knot by resolving the crossings from left to right, and use the skein relation from Definition \ref{conway_skein} to compute the polynomial. At each step, we have the following cases to consider: 
\begin{enumerate}
        \item The leftmost braid is $a_i$ where $i$ is odd and $a_i \geq 3$. The resolutions give the links corresponding to $[a_i - 2, a_{i + 1}, \dots, a_{2n + 1}]$ and $[a_i - 1, a_{i + 1}, \dots, a_{2n + 1}]$ for $L_-$ and $L_0$ respectively.
    \begin{figure}[h]
    \begin{center}
        \includegraphics[scale=0.7]{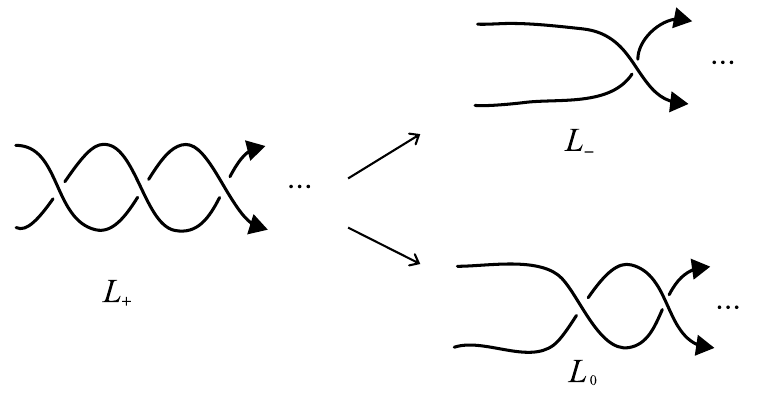}
    \end{center}
    \caption{Case 1}
    \label{odd}
    \end{figure} 
    \item The leftmost braid is $a_i$ where $i$ is odd and $a_i = 2$. The resolutions give the links corresponding to $[0, a_{i + 1}, \dots, a_{2n + 1}]$ and $[1, a_{i + 1}, \dots, a_{2n + 1}]$ for $L_-$ and $L_0$ respectively. In the former case, we can simply untwist the next braid, so we reduce to $[a_{i + 2}, \dots, a_{2n + 1}]$. In the latter case, we have $[a_{i + 1} + 1, a_{i + 2}, \dots, a_{2n + 1}]$. This process is depicted in Figures \ref{odd_2} and \ref{odd_0_and_1}.
    \begin{figure}[h]
    \begin{center}
        \includegraphics[scale=0.7]{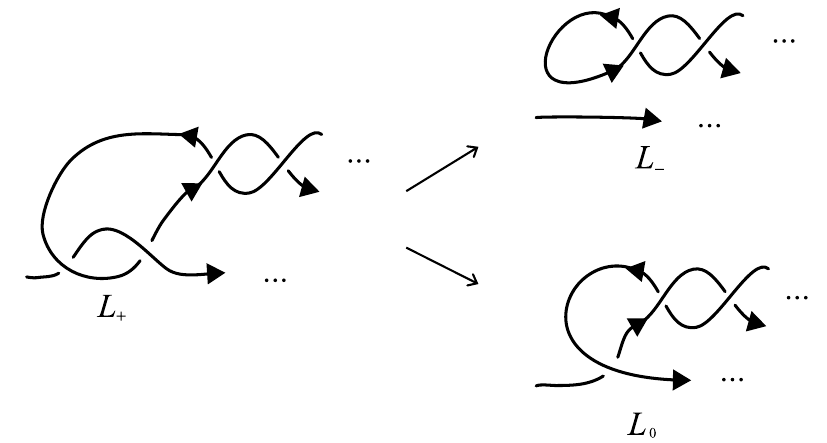}
    \end{center}
    \caption{Case 2}
    \label{odd_2}
    \end{figure} 
    \begin{figure}[h]
    \begin{center}
        \includegraphics[scale=0.7]{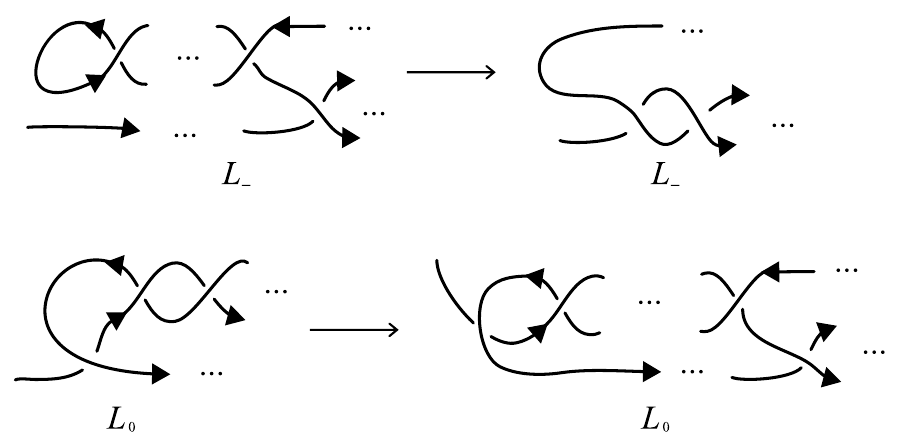}
    \end{center}
    \caption{Case 2 reduced}
    \label{odd_0_and_1}
    \end{figure} 
    \item The leftmost braid is $a_i$ where $i$ is odd and $a_i = 1$ or $a_i = 0$. We use the same reduction in case 2, illustrated in Figure \ref{odd_0_and_1}.
    \item The leftmost braid is $a_i$ where $i$ is even. Notice that from the above process and the fact that $a_i$ is even if $i$ is even for a positive two-bridge knot (Lemma \ref{cond_for_pos}), we have the resolutions in Figure \ref{even}. $L_-$ is the link corresponding to $[a_i - 2, a_{i + 1}, \dots, a_{2n + 1}]$. $L_0$ can be further untwisted and we are left with $[a_{i + 1}, \dots, a_{2n + 1}]$.
    \begin{figure}[h]
    \begin{center}
        \includegraphics[scale=0.7]{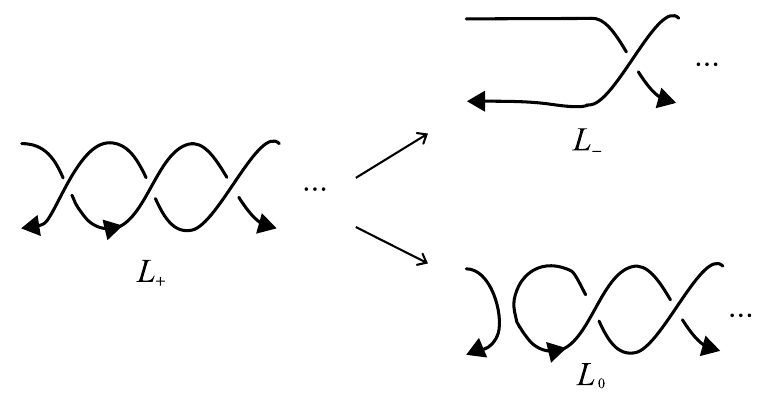}
    \end{center}
    \caption{Case 4}
    \label{even}
    \end{figure}
    \item Resolving the even braids as in case 4 will result in the case where the leftmost braid is $a_i$ where $i$ is even and $a_i = 1$. However, this twist can be simply incorporated into the next braid, resulting in $[a_{i+1} + 1, a_{i + 2}, \dots, a_{2n + 1}]$, as shown in Figure \ref{even_1}
    \begin{figure}[h]
    \begin{center}
        \includegraphics[scale=0.7]{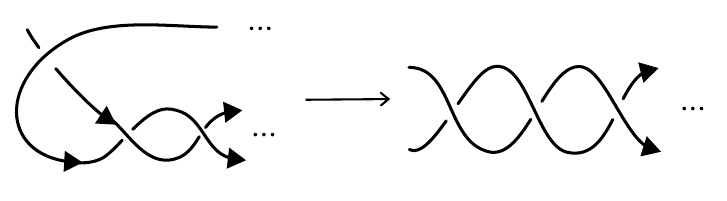}
    \end{center}
    \caption{Case 5}
    \label{even_1}
    \end{figure}
\end{enumerate}
At the end of this process, we are left with either the unknot, or the trivial link with two components. These have Conway polynomial $1$ and $0$ respectively. 

\section{Results}
\subsection{Proof of main result}

We now obtain the main result given in Theorem \ref{main}:

\begin{thm}
    Positive 2-bridge knots with up to 31 crossings do not admit chirally cosmetic surgeries, with the exception of $(2,2n+1)$ torus knots.
\end{thm}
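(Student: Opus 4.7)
The plan is to reduce Theorem \ref{main} to a finite computation via Corollary \ref{2.18}. If for every positive 2-bridge knot $K$ with at most $31$ crossings — other than the $(2,2n+1)$-torus knots — the quantity
\[
v_3(K)\!\left(\tfrac{1}{2}\det(K) + 3g(K) - \tfrac{5}{2}\right) - \bigl(7a_2(K)^2 - a_2(K) - 10a_4(K)\bigr)
\]
is nonzero, then Corollary \ref{2.18} rules out chirally cosmetic surgeries on each such $K$, and the theorem follows. So the proof is purely an enumeration-and-verification argument; no further theoretical input is needed beyond the tools already assembled in Section 2.

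The first task is enumeration. By Lemma \ref{cond_for_pos}, every positive 2-bridge knot is represented by an odd-length continued fraction $[a_1,\ldots,a_{2n+1}]$ with each $a_i>0$ and $a_{2i}$ even, and the crossing count equals $\sum_i a_i$. Thus fixing the bound $31$ reduces enumeration to listing integer tuples satisfying these constraints with $\sum_i a_i\le 31$. To prevent double-counting I would canonicalize representatives, using that $C(p,q)=C(p,q')$ when $qq'\equiv 1\pmod p$, which on the continued-fraction side corresponds to reversal of the tuple. The $(2,2n+1)$-torus knots, which must be excluded, are exactly the tuples $[2n+1]$ and are easy to filter out.

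For each remaining tuple I would compute the five invariants appearing in the obstruction. The genus is immediate from Lemma \ref{genus}. The coefficients $a_2$ and $a_4$ of the Conway polynomial, and hence $\det(K)=|\nabla_K(2i)|$ via the substitution $z^2=-4$, are obtained by recursively applying the skein relation $\nabla(L_+)-\nabla(L_-)=z\nabla(L_0)$ to the leftmost twist region following the five-case analysis in Section \ref{section 3}; each case strictly decreases $\sum_i a_i$, so the recursion terminates at the unknot or the two-component unlink, with Conway polynomials $1$ and $0$ respectively. For $v_3$, I would first convert the odd continued fraction to the even continued fraction guaranteed by Lemma \ref{lemma 1.7}, then apply the closed formula of Proposition 4.4 from \cite{ichiharawu}.

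The final step is to loop over the enumerated family and check the inequality. The main obstacle is computational scale rather than mathematical depth: over $1.3\times 10^6$ knots must be processed, so efficiency and — more importantly — correctness of the Python implementation are paramount. Concretely, the skein recursion benefits from memoization on the reduced tuples; all arithmetic should be done in exact integers so that equality in the obstruction formula is tested unambiguously; and the continued-fraction canonicalization must be handled carefully so no knot is silently skipped. A natural sanity check is to cross-validate the computed invariants against standard tables such as KnotInfo on the low-crossing range, which simultaneously audits the enumeration and the invariant routines before the full computation is run out to $31$ crossings.
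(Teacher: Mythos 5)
Your proposal matches the paper's argument: the authors likewise reduce Theorem \ref{main} to the contrapositive obstruction in Corollary \ref{2.18}, enumerate positive 2-bridge knots by continued fractions as in Lemma \ref{cond_for_pos}, compute $g$, $a_2$, $a_4$, $\det$, and $v_3$ exactly as you describe (skein recursion with memoization, even continued fractions for $v_3$), and verify computationally that equality never occurs for any of the roughly $1.3$ million knots up to $31$ crossings. The only differences are implementation-level details (e.g.\ your canonicalization to avoid duplicate tuples), which do not affect correctness.
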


\begin{proof}
    This theorem was verified computationally by running the aforementioned python program for all positive 2-bridge knots up to 31 crossings. No knot resulted in an equality of the obstruction formula. The result thus directly follows from Corollary \ref{2.18}.
\end{proof}

\begin{rmk}
    In theory, the program can verify knots up to any specified number of crossings, but the memory and time complexity scales exponentially with the number of crossings.
\end{rmk}

The 31 crossing threshold was picked because this number allowed us to verify over a million knots, 1,346,268 to be exact. To expedite the recursive process, we used a dynamic database to store the links that were the results of resolutions from the skein relation, for use in more complicated links. Even for such a relatively small number, the program took 100 minutes to complete and used around 8GB of RAM (memory). We have included the values returned (namely the rational number $p/q$, the left-hand side and the right-hand side of obstruction formula \ref{obstruction_formula}, and whether the formula is an equality) in Appendix \ref{outputs}.

\subsection{Future directions and further results}
Although we did not verify the conjecture explicitly for all positive 2-bridge knots, the result of this paper could lead to future work towards the larger result. One possible direction would be proving the general case of inequality in the obstruction formula through an inductive argument. As explained in the skein relation recursion, each positive 2-bridge knot can be simplified into two 2-bridge knots with strictly fewer crossings. Using these resolutions, there may exist a strict inequality involving the sides of the obstruction formula \ref{obstruction_formula} which could be stated in terms of crossing number or continued fraction representation. If such an inequality exists, the Conjecture \ref{chirallyconj} would be proved for all positive 2-bridge knots. 

\begin{rmk}
    In most of the cases our program checked, the left-hand side of the obstruction formula was larger than the right-hand side. The right-hand side is larger when the knot is a $(2, 2n + 1)$ torus knot and when both $p$ and $q$ are relatively small.
    \label{ineq remark}
\end{rmk}
 In light of this observation, we created a plot to compare the complexity of the knot $C(p,q)$ to the quotient obtained from dividing the absolute values of the left-hand side of the obstruction formula by the right-hand side. The $\mathcal{L}^1$ norm was was chosen to measure complexity, and 
\begin{align}
     |p|+|q|
\end{align}
was plotted against 
\begin{align}
    \frac{\left|v_3\left(\frac{1}{2} \det(K) + 3g - \frac{5}{2}\right)\right|}{\left|7a_2^2 - a_2 - 10a_4\right|}
\end{align}
for each positive knot $C(p, q)$ up to 17 crossings. The resulting plot is shown in Figure \ref{quotient}.  Additional plots comparing different measures of complexity against different forms derived from the obstruction formula are included in Appendix \ref{plots}.
\begin{figure}[h]
    \begin{center}
        \includegraphics[scale=0.7]{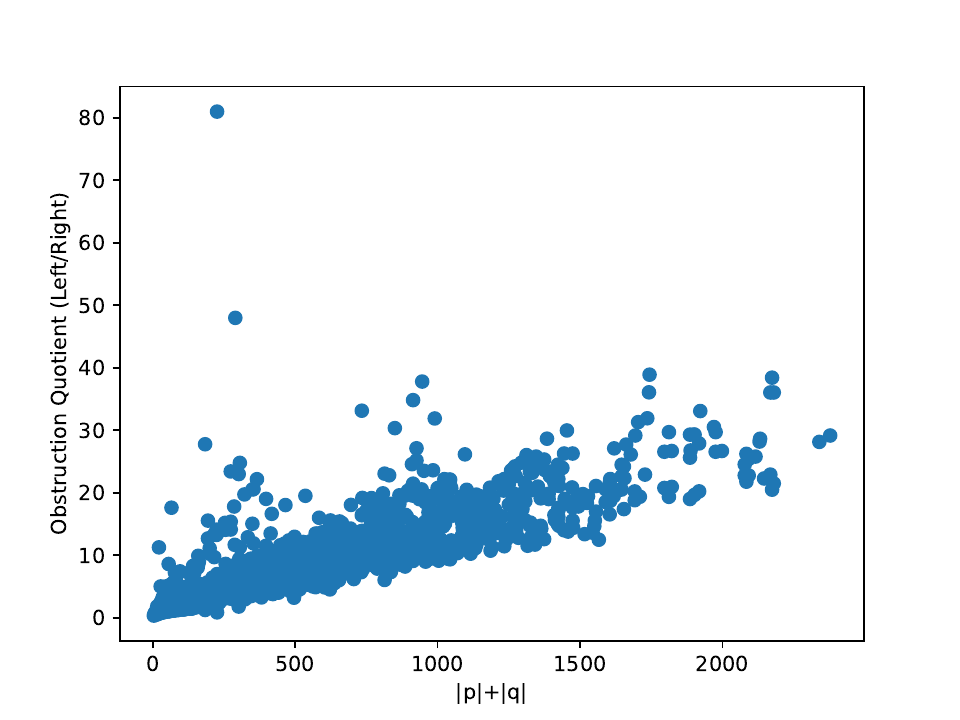}
         \caption{Plot of knot complexity vs. obstruction formula quotient for positive 2-bridge knots up to 17 crossings}
         \label{quotient}
    \end{center}
\end{figure} 

As apparent in Figure \ref{quotient}, the general trend is that the quotient given in (2) increases as the complexity of the knot $C(p,q)$ given by (1) increases, and similar results hold for the plots given in Appendix \ref{plots} as well. For $|p| + |q|$ sufficiently large, we also note that since by Remark \ref{ineq remark} the numerator of (2) is known to always be strictly greater than the denominator of (2) for positive knots $C(p,q)$ up to 17 crossings, the expression given in (2) is always strictly greater than 1 for the plot given in Figure \ref{quotient}. 

If we disregard the cases where $p$ and $q$ are small, these results suggest that for even higher complexity positive 2-bridge knots such as those exceeding 31 crossings, the expression given in (2) will likely continue to be strictly greater than 1. In fact, the trend in the plot from Figure \ref{quotient} suggests that higher complexity knots will result in values of (2) which are actually much greater than 1. This in turn would indicate that the left-hand side of the obstruction formula is always strictly greater than the right-hand side and thus that the obstruction formula never results in equality for any positive 2-bridge knot with the exception of the $(2, 2n + 1)$ torus knots. These ideas lead us to the following conjectures. 

\begin{conj}
    If $K$ is a positive 2-bridge knot, with rational number $p/q$ satisfying $p, q > 20$, then
    \[
        v_3\left(\frac{1}{2} \det(K) + 3g - \frac{5}{2}\right) > 7a_2^2 - a_2 - 10a_4   
    \]
    \label{rhs>lhs}
\end{conj}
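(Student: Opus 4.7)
The plan is to reduce Conjecture \ref{rhs>lhs} to a comparison of growth rates by expressing all five classical invariants as explicit functions of the continued fraction data $[a_1, \ldots, a_{2n+1}]$ and then showing that the LHS dominates the RHS once $p, q > 20$. The crucial observation is that $\det(C(p, q)) = p$ for any 2-bridge knot (since the double branched cover is the lens space $L(p, q)$), so the LHS carries the factor $v_3 \cdot p/2$, while the RHS is a low-degree polynomial expression in $a_2$ and $a_4$. Since the numerator $p$ of a positive continued fraction grows at least like a Fibonacci sequence in the length $n$, one expects the LHS to outpace the RHS for any sufficiently complex knot.

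The first step would be to assemble closed-form expressions and bounds. Lemma \ref{genus} gives $g(K)$, the formula of Ichihara--Wu gives $v_3(K)$, and $\det(K) = p$ is immediate from the continued fraction value. For $a_2$ and $a_4$, I would derive explicit recurrences from the five-case skein reduction in Section \ref{section 3}: at each node the tree produces two strictly simpler 2-bridge knots, and the Conway polynomial transforms linearly via $\nabla(L_+) = \nabla(L_-) + z \nabla(L_0)$. Extracting the degree-2 and degree-4 coefficients yields linear recurrences for $a_2$ and $a_4$ in the length of the continued fraction, from which one can bound both polynomially in $g$ and the partial sums of the $a_i$. Combined with the bound $v_3(K) \ge c = \sum_i a_i$ from \cite{stoimenow}, this reduces the conjecture to a concrete polynomial inequality in the $a_i$.

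A natural case split on $n$ should close the argument. For small $n$, the constraint $p, q > 20$ forces at least one $a_i$ to be moderately large, pushing $c$ (and hence $v_3$) up while $g$ stays linearly bounded by $c$; each short continued fraction can then be handled by a direct calculation. For large $n$, the exponential growth of $p = \det(K)$ swamps any polynomial quantity in the $a_i$, so the inequality is immediate. The hard part, I expect, will be the intermediate regime, together with the precise behavior of $a_4$: unlike $g$ or $v_3$, the coefficient $a_4$ involves cross-interactions between distant twist regions and does not factor cleanly along the skein recursion. A clean way around this would be to prove a strict monotonicity statement --- that every non-trivial skein step of Section \ref{section 3} strictly increases the quotient $\text{LHS}/\text{RHS}$ plotted in Figure \ref{quotient} --- and couple it with an exhaustive verification of the finite base region $\{p, q > 20, \; c \le C_0\}$ using the existing Python program for some explicit threshold $C_0$.
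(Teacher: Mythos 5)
The statement you are addressing is stated in the paper as a \emph{conjecture}, not a theorem: the authors offer no proof, only computational evidence (the inequality holds for all positive 2-bridge knots they checked up to 31 crossings) and the empirical trend in Figure \ref{quotient} that the quotient of the two sides grows with $|p|+|q|$. So there is no paper proof to compare against, and your text should be judged as a proposed strategy for an open problem. As such it is a sensible plan --- the identification $\det(K)=p$, the Fibonacci-type lower bound on $p$ in terms of the length of the continued fraction, and the idea of bounding everything in terms of the twist parameters $a_i$ are all the right ingredients --- but it is not a proof, and the places where it is vague are exactly the places where the difficulty lives.

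Concretely, three gaps. First, the right-hand side contains $7a_2^2$, and for positive knots $a_2$ can grow quadratically in the genus, so $7a_2^2$ can grow like $g^4$; meanwhile $v_3$ grows roughly like $g^3$ for the worst cases (e.g.\ $(2,2n+1)$ torus knots, where $\det(K)=2n+1$ stays small and the paper observes the inequality actually \emph{reverses}). So the claim that ``the LHS outpaces the RHS for any sufficiently complex knot'' is false without a quantitative use of the hypothesis $p,q>20$; your proposal never explains how that hypothesis excludes the near-torus-knot regime where $q$ is small relative to $p$ or vice versa, and no explicit lower bound on $\det(K)$ or $v_3$ in terms of $\min(p,q)$ is given. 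Second, the ``linear recurrences for $a_2$ and $a_4$'' are asserted but not derived, and you yourself concede that $a_4$ involves cross-interactions between distant twist regions; without an actual upper bound on $|a_4|$ and $|a_2|$ in terms of the $a_i$, the ``concrete polynomial inequality'' you reduce to does not exist yet. Third, the proposed monotonicity statement --- that every skein step strictly increases the quotient $\mathrm{LHS}/\mathrm{RHS}$ --- is not supported by the data (Figure \ref{quotient} shows several distinct linear families with different slopes, and the quotient dips below the trend for small $q$) and would in any case need the base of the induction to avoid the torus-knot family where the quotient is less than $1$. Until these are filled in, what you have is a research program consistent with the paper's own suggested future direction, not an argument.
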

If Conjecture \ref{rhs>lhs} is shown to be true, then Conjecture \ref{big_conjecture} below follows immediately from Corollary \ref{2.18}, along with a separate proof for knots of lower complexity (with small $p$ and $q$). 

\begin{conj}
    Positive 2-bridge knots do not admit chirally cosmetic surgeries, with the exception of $(2,2n+1)$ torus knots. 
    \label{big_conjecture}
\end{conj}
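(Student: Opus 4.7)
The plan is to upgrade the 31-crossing computer verification into a uniform analytic bound via the obstruction formula. By Corollary \ref{2.18}, it suffices to establish the strict inequality
\[
    v_3(K)\left(\tfrac{1}{2}\det(K) + 3g(K) - \tfrac{5}{2}\right) > 7a_2(K)^2 - a_2(K) - 10a_4(K)
\]
for every positive 2-bridge knot $K$ other than a $(2, 2n+1)$ torus knot. I would split into two regimes: a \emph{bounded regime} where the continued fraction has few and small entries, and an \emph{asymptotic regime} where the even continued fraction representation $[2b_1, 2c_1, \ldots, 2b_n, 2c_n]$ has either large length $n$ or large entries.

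First I would derive closed-form formulas for each of the five invariants $a_2$, $a_4$, $v_3$, $\det(K)$, and $g(K)$ in terms of the even continued fraction expansion from Lemma \ref{lemma 1.7}. We already have such formulas for $g$ (Lemma \ref{genus}) and $v_3$ (Proposition 4.4 of \cite{ichiharawu}); analogous formulas for $a_2$, $a_4$, and $\det$ should follow from solving the linear recursion on the Conway polynomial induced by the skein reductions of Section \ref{section 3}. Concretely, if $\nabla_{K_n}(z)$ denotes the Conway polynomial after reducing $n$ blocks, the recursion produces a $2\times 2$ transfer matrix whose entries are linear in the $b_i, c_i$, and extracting coefficients of $z^0, z^2, z^4$ then yields the desired polynomial expressions in the entries. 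From this one reads off that $a_2$ is quadratic and $a_4$ is quartic in $(b_i, c_i)$, while $v_3$ is cubic and $\det$ grows like a product of linear forms.

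With closed forms in hand, the asymptotic regime follows from a degree count: the LHS is essentially $v_3\cdot\det$, whose combined growth in the $b_i, c_i$ exceeds the quartic RHS, so the inequality becomes tractable once the entries or length are large enough. For the bounded regime, one would combine the 31-crossing computation with an analytic treatment of the remaining low-complexity families, such as twist knots and the two-parameter families $[2a, 2b, 2c]$, by substituting their explicit invariants into the formula and checking the inequality parameter-wise. The excluded $(2, 2n+1)$-torus family appears precisely as the equality locus in this analysis, which serves as a useful sanity check.

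The main obstacle will be the \emph{intermediate} regime, where the entries are small but the length $n$ is moderate; here neither direct enumeration nor an asymptotic bound is decisive, and the cross-terms of $a_2^2$ and $a_4$ can partially cancel the corresponding cross-terms of $v_3\cdot\det$. A natural route is induction on $n$ along the reduction $[2b_1, 2c_1, \ldots, 2b_n, 2c_n] \mapsto [2b_1, 2c_1, \ldots, 2b_{n-1}, 2c_{n-1}]$, tracking how each invariant changes when a block $(2b_n, 2c_n)$ is appended and showing that the difference between the two sides of the obstruction is non-decreasing, uniformly in the preceding entries. Establishing that monotonicity, and ruling out a finite list of degenerate initial configurations that reduce to the torus-knot equality, is the heart of the problem and the step I expect to be genuinely hard.
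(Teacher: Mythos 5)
First, note that the paper does not prove this statement: it appears only as Conjecture \ref{big_conjecture}, supported by the finite verification of Theorem \ref{main} (knots up to 31 crossings) and the empirical trends in Figure \ref{quotient}. Your proposal is essentially the research program the authors themselves outline in their future-directions discussion --- closed forms for the invariants in the (even) continued fraction entries, plus an inductive or monotonicity argument along the block-appending reduction --- so it is well aligned with the paper, but it is a program rather than a proof, and by your own admission the decisive step is missing.

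Three concrete gaps. (1) The asymptotic ``degree count'' is not a bound: knowing that $v_3\cdot\det$ has larger total degree in the $(b_i,c_i)$ than $7a_2^2-a_2-10a_4$ says nothing about which side dominates when only some entries grow while the rest sit at their minimum value, nor does it control the signs of cross-terms; one needs explicit coefficient estimates on the closed forms, which you have not produced. (2) The target inequality is the wrong one in the bounded regime: Remark \ref{ineq remark} and the hypothesis $p,q>20$ in Conjecture \ref{rhs>lhs} indicate that the right-hand side can exceed the left-hand side for non-torus knots of small complexity, so a uniform strict inequality is false there; what must be established instead is the Diophantine condition that the two sides are merely \emph{unequal}, which is a more delicate statement than ``check the inequality parameter-wise,'' and it is the reason the paper itself defers the small-$(p,q)$ cases to a separate argument. (3) Most importantly, the monotonicity claim --- that appending a block $(2b_n,2c_n)$ never decreases the gap between the two sides, uniformly in the preceding entries --- is exactly the open problem. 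You identify it correctly as the heart of the matter, but identifying it is not proving it, and families such as $[2,2,\dots,2]$ of growing length show that your ``intermediate regime'' is an infinite family that no finite computation can absorb. As it stands, the proposal neither proves the conjecture nor goes beyond what the paper already conjectures and verifies computationally.
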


Looking at Figure \ref{quotient}, there seems to be some linear patterns of scatter points with different slopes in the cone shaped region. We then tried to separate the linear patterns by their slopes by plotting $|q|$ against the obstruction quotient divided by $|q|$, which is 
\[
s(K) := \frac{\left|v_3\left(\frac{1}{2} \det(K) + 3g - \frac{5}{2}\right)\right|}{\left|7a_2^2 - a_2 - 10a_4\right||q|}
\]

\begin{figure}[h]
    \begin{center}
        \includegraphics[scale=0.7]{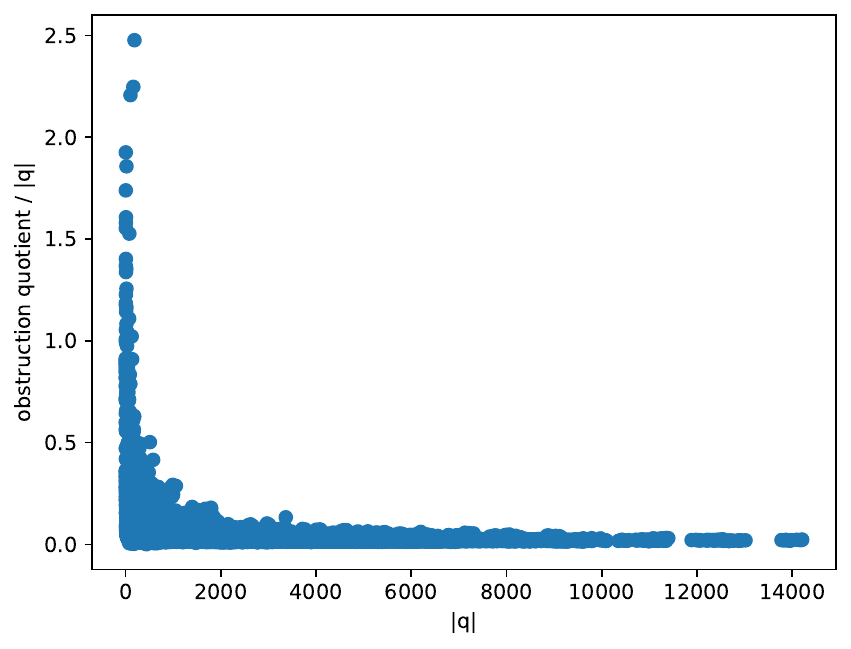}
         \caption{Plot of $|q|$ against obstruction quotient divided by $|q|$, for positive two bridge knots with under 23 crossings}
         \label{slope_vs_q}
    \end{center}
\end{figure} 
\vspace{-10pt}
After thresholding the $|p|$ vs. $|q|$ graph with the value of $s(K)$, we get figure \ref{p_vs_q}.
\begin{figure}[h]
    \begin{center}
        \includegraphics[scale=0.4]{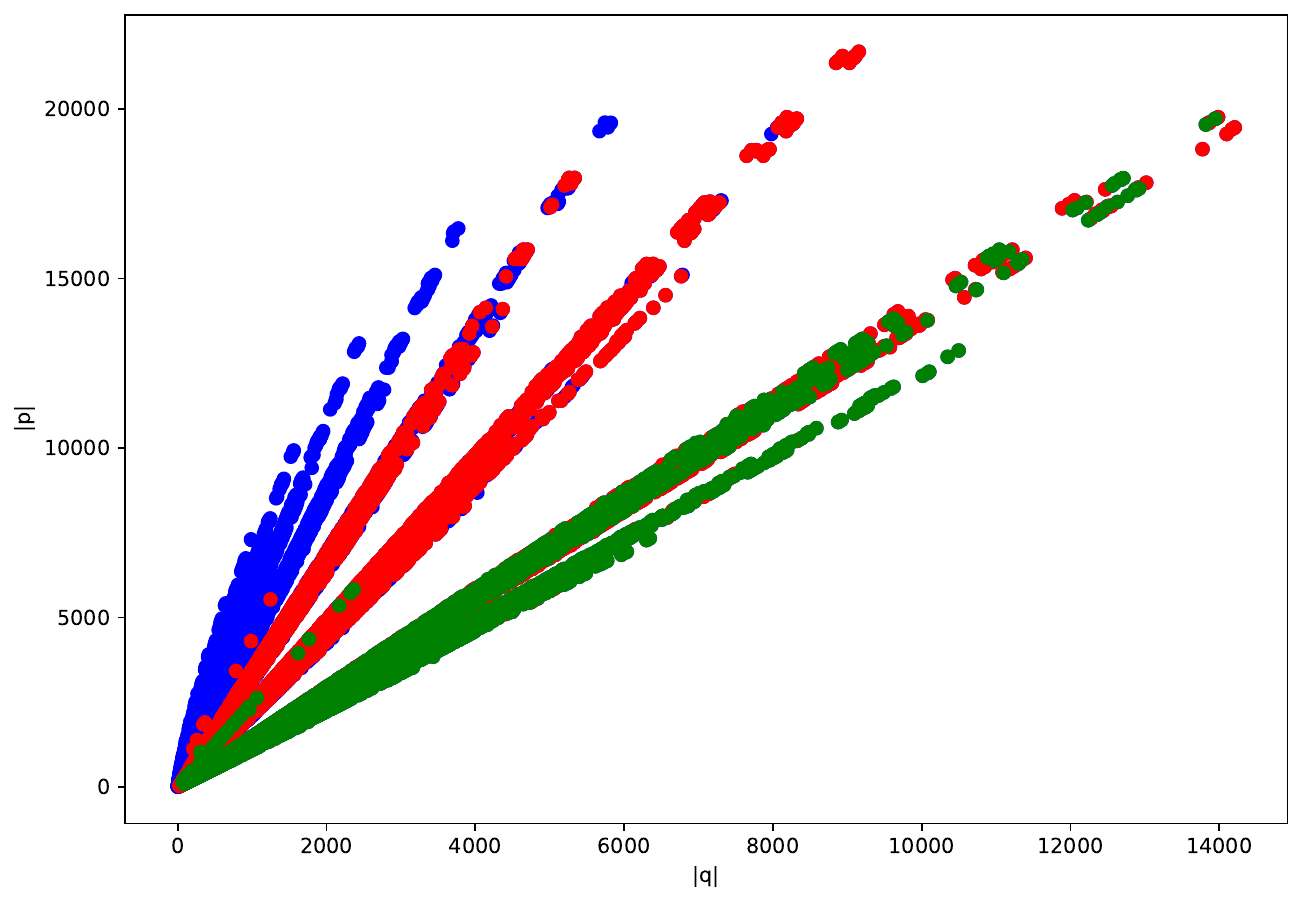}
         \caption{Plot of $|p|$ against $|q|$, where the points labeled red are knots with $s(K) \in (0.02, 0.045)$, and the points labeled green are knots with $s(K) < 0.02$, and points labeled blue are knots with $s(K) > 0.045$}
         \label{p_vs_q}
    \end{center}
\end{figure} 
For positive two bridge knots, remember that $det(K) = p$. Then 
\[
    s(K) = \frac{|v_3(\frac{1}{2}p + 3g - \frac{5}{2})|}{|7a_2^2 - a_2 - 10a_4||q|}.
\]
The general pattern we can the observe from Figure \ref{p_vs_q} is that $s(K)$ increases as $\frac{|p|}{|q|}$ increases.

\section{Acknowledgements}
This work was partially supported by NSF RTG grant No. 2136090. The authors would like to thank Hyunki Min, S\"umeyra Sakall{\i}, and Konstantinos Varvarezos for their guidance and support on this project. The authors would also like to thank Sucharit Sarkar for many helpful discussions and for hosting this REU.

\section{Appendix}
\subsection{Seifert Form}
\label{seifert_form}
In this section, we give an alternative way to calculate the Alexander (and thus Conway) polynomial of a positive 2-bridge knot. 

Recall that we can get the Seifert matrix (or form) $S$ of a knot $K$ by the $n \times n$ matrix of integers $S_{ij} = \text{lk}(x_i, x_j^+)$. Here, the $x_i$ are the generators of $H_1(S^3\setminus K)$, $x_i^+$ is the positive pushoff of $x_i$, and lk is the linking number. This matrix can be written out explicitly for positive 2-bridge knots in the following form.

\begin{thm}
Given a continued fraction representation consistent with Lemma \ref{cond_for_pos}, we can write the Seifert form in the following block diagonal form.  

$$S = \begin{pmatrix}
    A_1 & 0 & \cdots & \cdots & 0 \\
    1 & -1-\frac{a_2}{2} & 0& \cdots & 0\\
    0 & 1 & A_3 & \cdots & \vdots \\
    \vdots & \ddots & \ddots & \ddots & 0\\
    0 & \cdots & 0 & 1 & A_{2n + 1} 
\end{pmatrix}$$

with $a_i - 1 \times a_i - 1$ blocks
\[
    A_i = \begin{pmatrix}
        -1 & 0 & \cdots & & 0 \\
        1 & -1  & \ddots & & 0 \\
        0 & \ddots & \ddots & & \vdots \\
        \vdots & \ddots & \ddots & \ddots & 0 \\
        0 & \cdots & 0 & 1 & -1
    \end{pmatrix}.
\]

\end{thm}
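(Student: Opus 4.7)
The plan is to produce an explicit ordered basis for $H_1(\Sigma)$, where $\Sigma$ is the Seifert surface from Lemma \ref{genus} applied to the standard projection of Figure \ref{normal_twist_notation}, and then to compute each linking number $\mathrm{lk}(\gamma_k, \gamma_\ell^+)$ directly from the picture. By Lemma \ref{genus}, $\mathrm{rank}(H_1(\Sigma)) = 2g = \sum_{i=0}^n a_{2i+1} - 1$, and this rank splits naturally as $\sum_{i=0}^n (a_{2i+1}-1)$ cycles coming from the odd-indexed twist regions plus $n$ cycles coming from the even-indexed ones, matching the block sizes displayed in the theorem.

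For the basis, I would take the following cycles. Within each odd-indexed twist region $a_{2i+1}$, the $a_{2i+1}$ parallel bands between the two adjacent Seifert circles (as in Figure \ref{seifert surface}) give, after removing one band for a spanning tree, exactly $a_{2i+1}-1$ fundamental cycles $\gamma_1^{(i)}, \ldots, \gamma_{a_{2i+1}-1}^{(i)}$, each looping through a pair of consecutive bands. For each even-indexed twist region $a_{2i}$, there is a single cycle $\eta_i$ that travels through all $a_{2i}$ half-twisted bands and closes up along the adjacent Seifert circles on either side. Ordering these cycles left-to-right along the diagram produces the ordered basis whose labeling is compatible with the block structure in the statement.

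The matrix entries are then computed locally. Each $\gamma_j^{(i)}$ encloses one full twist of the surface (two positive half-twists contributing $-1$ to the self-linking under the pushoff sign convention of the paper; cf.\ the Remark after Lemma \ref{cond_for_pos}), producing the $-1$ on the diagonal of $A_{2i+1}$. Adjacent cycles $\gamma_j^{(i)}$ and $\gamma_{j+1}^{(i)}$ share exactly one band, and that band creates a single positive crossing between one cycle and the pushoff of the other, giving the subdiagonal $1$ together with $0$ above the diagonal. Each $\eta_i$ acquires $-1$ from the enclosed full twist plus $-\tfrac{a_{2i}}{2}$ from the $a_{2i}$ positive half-twists on top, yielding the scalar $-1 - \tfrac{a_{2i}}{2}$. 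The off-block $1$'s arise because $\eta_i$ shares one band with the last cycle of the preceding odd block and one with the first cycle of the following odd block; all remaining pairs of basis cycles have disjoint supports on $\Sigma$ and so contribute $0$.

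The main obstacle will be bookkeeping with signs and pushoff conventions: fixing the orientation of each $\gamma_k$ and the side of the positive pushoff $\gamma_k^+$ so that each positive half-twist contributes exactly $-\tfrac{1}{2}$ to the self-linking, and so that the off-diagonal $1$'s land on the sub-block-diagonal rather than the super-diagonal. I would calibrate these conventions on a small example such as $[2,2,3]$, where the Seifert matrix can be verified against a standard reference for the corresponding rational knot, and then propagate the local computation to the general case by direct inspection, since each linking number depends only on the data of a single twist region or a single block-to-block adjacency.
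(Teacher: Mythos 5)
Your proposal follows essentially the same route as the paper, whose proof is only the one-line sketch that the matrix ``can be seen by considering a projection of the type in Figure \ref{normal_twist_notation} and explicitly calculating linking numbers''; you simply carry out that plan in more detail, choosing the basis of $H_1(\Sigma)$ from the twist regions (with sizes matching the blocks, consistent with Lemma \ref{genus}) and computing the local linking contributions, and your accounting of the diagonal entries $-1$ and $-1-\tfrac{a_{2i}}{2}$ and of the sub-diagonal $1$'s is consistent with the stated matrix (e.g.\ it reproduces the correct Seifert matrix for $C(7,3)$). The only caution is the one you already flag: the placement of the $1$'s below rather than above the diagonal and the sign of each half-twist depend on the pushoff and orientation conventions, which must be fixed once and used consistently.
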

\begin{proof}
    This result can be seen by considering a projection of the type in Figure \ref{normal_twist_notation} and explicitly calculating linking numbers. 
\end{proof}

With the Seifert form for $K$, we can directly get the Alexander polynomial $\Delta_K(t) = \det\left(tS - S^T\right)$. 
\newpage
\subsection{Sample output}
\label{outputs}
Here, we included the sample output from our code when we test 2-bridge positive knots up to 11 crossings.
\begin{multicols}{2}
\begin{figure}[H]
    \begin{center}
        \includegraphics[scale=0.65]{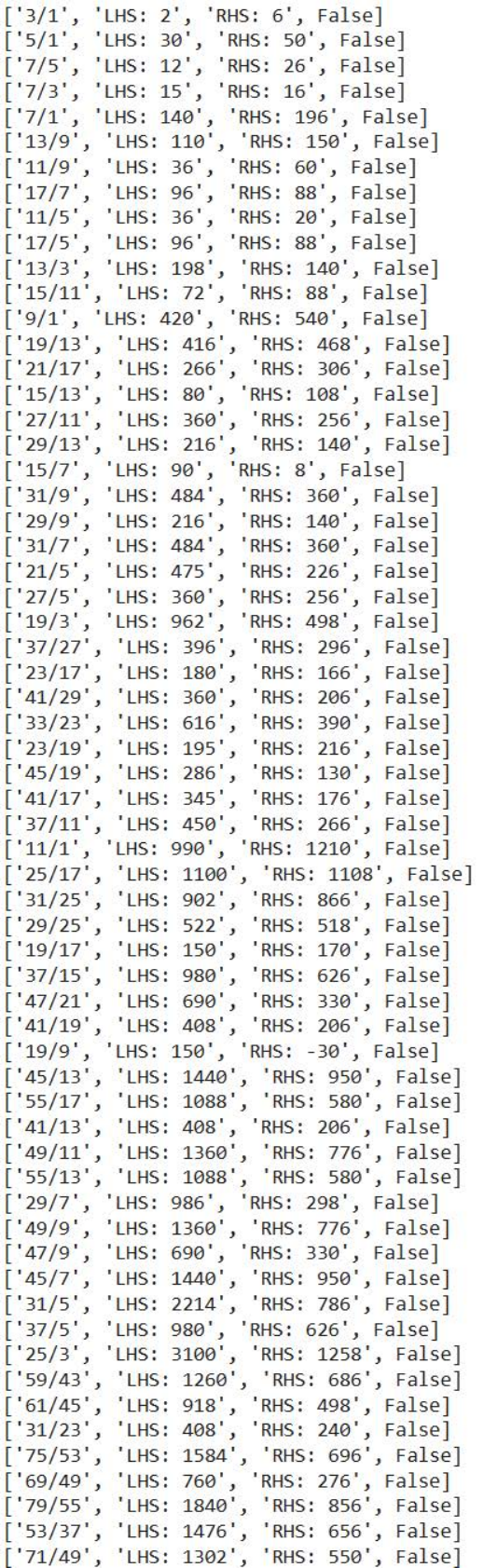}
    \end{center}
\end{figure}
\columnbreak
\begin{figure}[H]
    \begin{center}
        \includegraphics[scale=0.65]{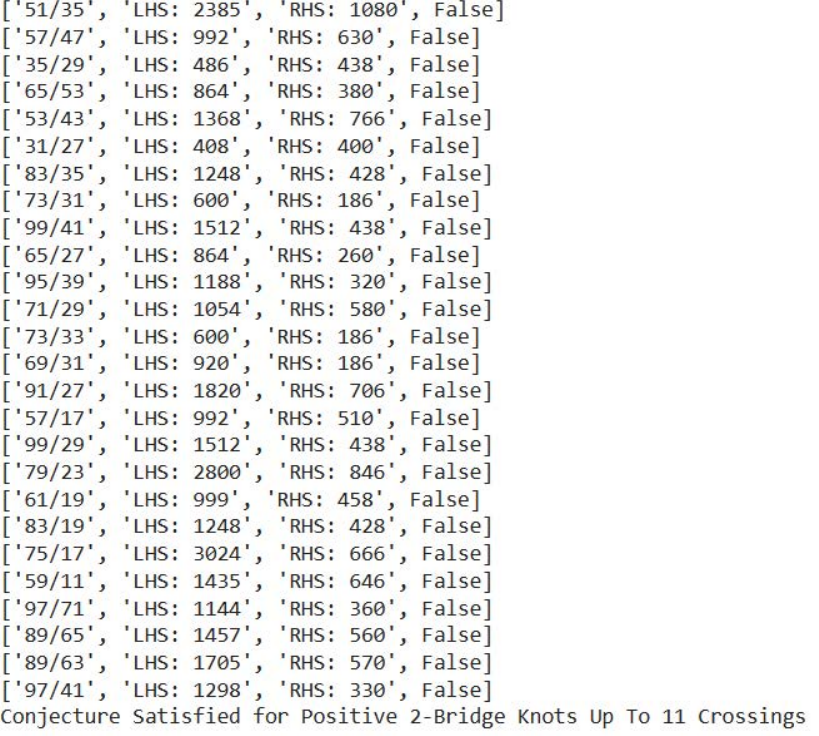}
    \end{center}
\end{figure}
\end{multicols}

\subsection{Additional Plots}
\label{plots}
Here we include some additional plots comparing knot complexity and variations in the obstruction formula. In every measure of complexity, we see a positive correlation between complexity and the magnitude of inequality for the obstruction formula. 

\begin{figure}[h]
    \begin{center}
        \includegraphics[scale=0.65]{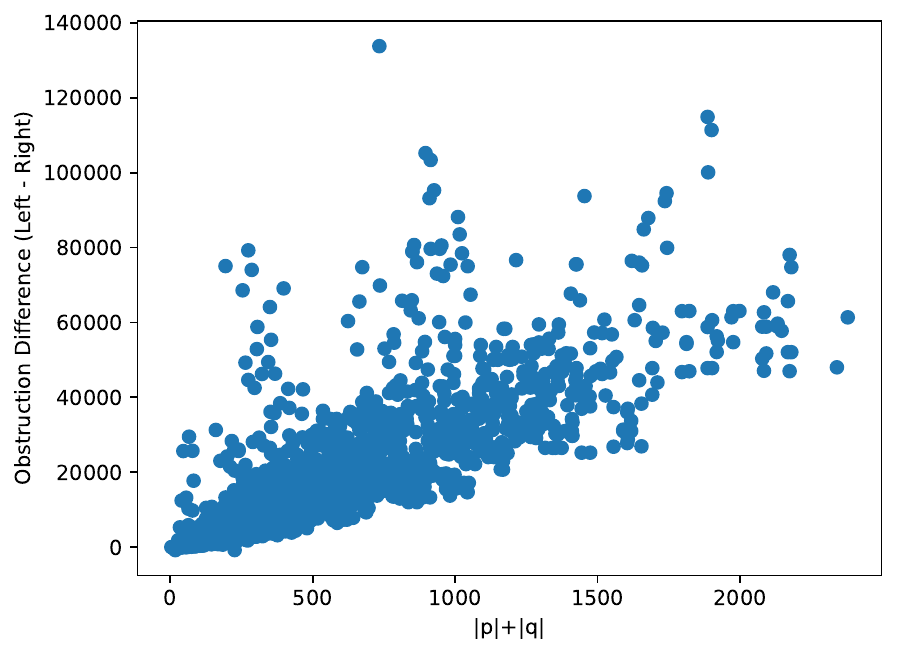}
         \caption{Plot of knot complexity vs. obstruction formula difference for positive 2-bridge knots up to 17 crossings}
    \end{center}
\end{figure} 

\begin{figure}[H]
    \begin{center}
        \includegraphics[scale=0.65]{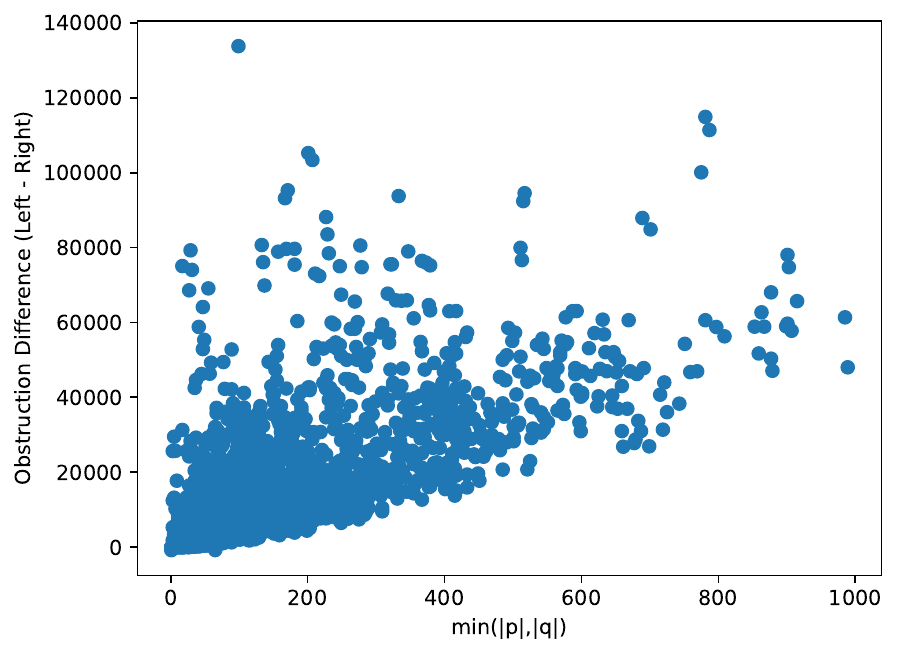}
         \caption{Plot of knot complexity ($\min(|p|,|q|)$ vs. obstruction formula difference for positive 2-bridge knots up to 17 crossings}
    \end{center}
\end{figure} 

\begin{figure}[h]
    \begin{center}
        \includegraphics[scale=0.65]{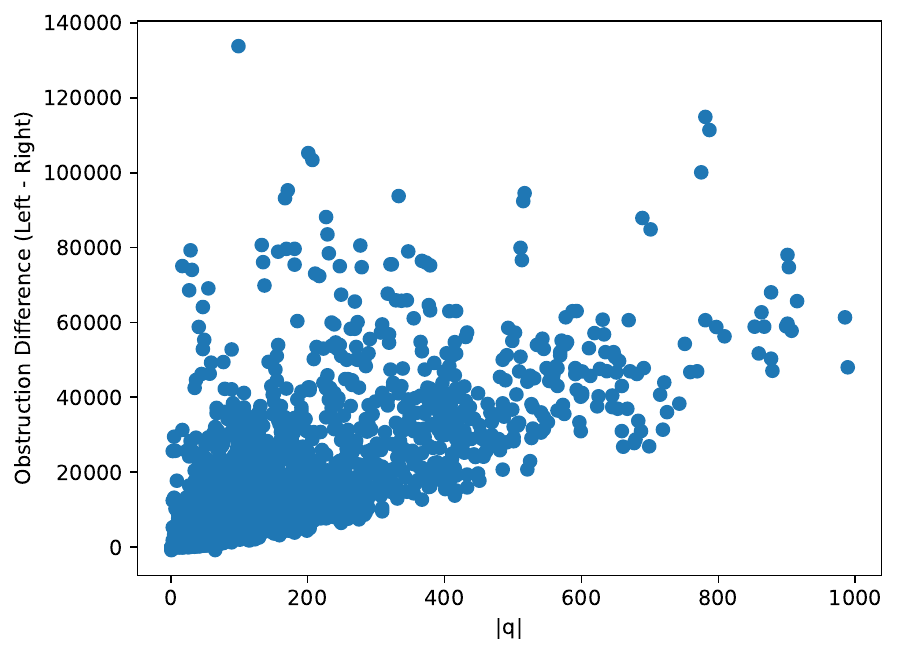}
         \caption{Plot of knot complexity $(|q|)$ vs. obstruction formula difference for positive 2-bridge knots up to 17 crossings}
    \end{center}
\end{figure} 

\begin{figure}[H]
    \begin{center}
        \includegraphics[scale=0.65]{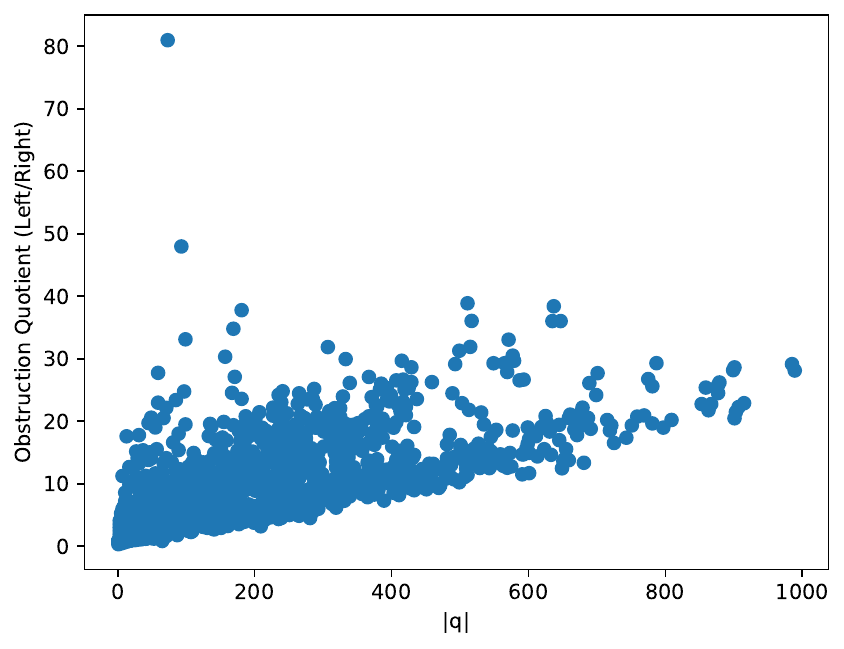}
         \caption{Plot of knot complexity $(|q|)$ vs. obstruction formula quotient for positive 2-bridge knots up to 17 crossings}
    \end{center}
\end{figure} 
\clearpage
\printbibliography

@article{konstantinos,
      title={Heegaard Floer homology and chirally cosmetic surgeries}, 
      author={Varvarezos, Konstantinos},
      year={2021},
      eprint={2112.03144},
      archivePrefix={arXiv},
      primaryClass={math.GT}
}

@misc{ichiharawu,
     AUTHOR = {Ichihara, Kazuhiro and Wu, Zhongtao},
     TITLE = {A note on {J}ones polynomial and cosmetic surgery},
   JOURNAL = {Comm. Anal. Geom.},
  FJOURNAL = {Communications in Analysis and Geometry},
    VOLUME = {27},
      YEAR = {2019},
    NUMBER = {5},
     PAGES = {1087--1104},
      ISSN = {1019-8385},
   MRCLASS = {57K14 (57K10)},
  MRNUMBER = {4037376},
MRREVIEWER = {Toshio Saito},
       DOI = {10.4310/CAG.2019.v27.n5.a3},
       URL = {https://doi.org/10.4310/CAG.2019.v27.n5.a3},
}

@book{saveliev,
title = {Lectures on the Topology of 3-Manifolds},
author = {Saveliev, Nikolai},
publisher = {De Gruyter},
address = {Berlin, Boston},
doi = {doi:10.1515/9783110250367},
isbn = {9783110250367},
year = {2012},
lastchecked = {2023-07-26}
}

@book {lickorish,
    AUTHOR = {Lickorish, W. B. Raymond},
     TITLE = {An introduction to knot theory},
    SERIES = {Graduate Texts in Mathematics},
    VOLUME = {175},
 PUBLISHER = {Springer-Verlag, New York},
      YEAR = {1997},
     PAGES = {x+201},
      ISBN = {0-387-98254-X},
   MRCLASS = {57M25 (57N10)},
  MRNUMBER = {1472978},
MRREVIEWER = {Darryl McCullough},
       DOI = {10.1007/978-1-4612-0691-0},
       URL = {https://doi.org/10.1007/978-1-4612-0691-0},
}

@book {adams,
    AUTHOR = {Adams, Colin C.},
     TITLE = {The knot book},
      NOTE = {An elementary introduction to the mathematical theory of
              knots},
 PUBLISHER = {W. H. Freeman and Company, New York},
      YEAR = {1994},
     PAGES = {xiv+306},
      ISBN = {0-7167-2393-X},
   MRCLASS = {57M25 (57M15 57M50 57Q45)},
  MRNUMBER = {1266837},
MRREVIEWER = {Marko Kranjc},
}

@article{Ichihara_2021,
	doi = {10.2140/agt.2021.21.2411},
	url = {https://doi.org/10.2140\%2Fagt.2021.21.2411},
	year = 2021,
	month = {oct},
	publisher = {Mathematical Sciences Publishers},
	volume = {21},
	number = {5},
	pages = {2411--2424},
	author = {Ichihara, Kazuhiro and Jong, In Dae and Mattman, Thomas W and Saito, Toshio},
	title = {Two-bridge knots admit no purely cosmetic surgeries},
	journal = {Algebraic \& Geometric Topology}
}

@ARTICLE{ozbagci,
      title={A characterization of quasipositive two-bridge knots}, 
      author={Ozbagci, Burak},
      year={2023},
      eprint={2307.07179},
      archivePrefix={arXiv},
      primaryClass={math.GT}
}

@article{tanaka,
    AUTHOR = {Tanaka, Toshifumi},
     TITLE = {Unknotting numbers of quasipositive knots},
   JOURNAL = {Topology Appl.},
  FJOURNAL = {Topology and its Applications},
    VOLUME = {88},
      YEAR = {1998},
    NUMBER = {3},
     PAGES = {239--246},
      ISSN = {0166-8641},
   MRCLASS = {57M25 (57N70)},
  MRNUMBER = {1632085},
MRREVIEWER = {Katura Miyazaki},
       DOI = {10.1016/S0166-8641(97)00172-7},
       URL = {https://doi.org/10.1016/S0166-8641(97)00172-7},
}

@article{hedden,
      AUTHOR = {Hedden, Matthew},
     TITLE = {Notions of positivity and the {O}zsv\'{a}th-{S}zab\'{o} concordance
              invariant},
   JOURNAL = {J. Knot Theory Ramifications},
  FJOURNAL = {Journal of Knot Theory and its Ramifications},
    VOLUME = {19},
      YEAR = {2010},
    NUMBER = {5},
     PAGES = {617--629},
      ISSN = {0218-2165},
   MRCLASS = {57M27 (57M25 57N70)},
  MRNUMBER = {2646650},
       DOI = {10.1142/S0218216510008017},
       URL = {https://doi.org/10.1142/S0218216510008017},
}

@article{ozvath,
	doi = {10.2140/gt.2003.7.615},
	url = {https://doi.org/10.2140\%2Fgt.2003.7.615},
	year = 2003,
	month = {oct},
	publisher = {Mathematical Sciences Publishers},
	volume = {7},
	number = {2},
	pages = {615--639},
	author = {Ozsv\'{a}th, Peter and Szab\'{o}, Zolt\'{a}n},
	title = {Knot Floer homology and the four-ball genus},
	journal = {Geometry \& Topology}
}

@article{kauffman_rational,
      AUTHOR = {Kauffman, Louis H. and Lambropoulou, Sofia},
     TITLE = {On the classification of rational knots},
   JOURNAL = {Enseign. Math. (2)},
  FJOURNAL = {L'Enseignement Math\'{e}matique. Revue Internationale. 2e S\'{e}rie},
    VOLUME = {49},
      YEAR = {2003},
    NUMBER = {3-4},
     PAGES = {357--410},
      ISSN = {0013-8584},
   MRCLASS = {57M25},
  MRNUMBER = {2028021},
}

@article{ichihara_chirally,
      AUTHOR = {Ichihara, Kazuhiro and Ito, Tetsuya and Saito, Toshio},
     TITLE = {Chirally cosmetic surgeries and {C}asson invariants},
   JOURNAL = {Tokyo J. Math.},
  FJOURNAL = {Tokyo Journal of Mathematics},
    VOLUME = {44},
      YEAR = {2021},
    NUMBER = {1},
     PAGES = {1--24},
      ISSN = {0387-3870},
   MRCLASS = {57K10},
  MRNUMBER = {4342357},
MRREVIEWER = {Christine Lescop},
       DOI = {10.3836/tjm/1502179325},
       URL = {https://doi.org/10.3836/tjm/1502179325},
}

@article{stoimenow,
      AUTHOR = {Stoimenow, Alexander},
     TITLE = {Positive knots, closed braids and the {J}ones polynomial},
   JOURNAL = {Ann. Sc. Norm. Super. Pisa Cl. Sci. (5)},
  FJOURNAL = {Annali della Scuola Normale Superiore di Pisa. Classe di
              Scienze. Serie V},
    VOLUME = {2},
      YEAR = {2003},
    NUMBER = {2},
     PAGES = {237--285},
      ISSN = {0391-173X},
   MRCLASS = {57M27 (57M25)},
  MRNUMBER = {2004964},
MRREVIEWER = {Jianyuan K. Zhong},
}

@article{l_space,
      AUTHOR = {Ozsv\'{a}th, Peter and Szab\'{o}, Zolt\'{a}n},
     TITLE = {On knot {F}loer homology and lens space surgeries},
   JOURNAL = {Topology},
  FJOURNAL = {Topology. An International Journal of Mathematics},
    VOLUME = {44},
      YEAR = {2005},
    NUMBER = {6},
     PAGES = {1281--1300},
      ISSN = {0040-9383},
   MRCLASS = {57R58 (57M27 57R65)},
  MRNUMBER = {2168576},
MRREVIEWER = {Thomas E. Mark},
       DOI = {10.1016/j.top.2005.05.001},
       URL = {https://doi.org/10.1016/j.top.2005.05.001},
}

@incollection{heegaard_floer,
    AUTHOR = {Ozsv\'{a}th, Peter and Szab\'{o}, Zolt\'{a}n},
     TITLE = {An introduction to {H}eegaard {F}loer homology},
 BOOKTITLE = {Floer homology, gauge theory, and low-dimensional topology},
    SERIES = {Clay Math. Proc.},
    VOLUME = {5},
     PAGES = {3--27},
 PUBLISHER = {Amer. Math. Soc., Providence, RI},
      YEAR = {2006},
   MRCLASS = {57R58 (57M27)},
  MRNUMBER = {2249247},
MRREVIEWER = {Stanislav Jabuka},
}

@incollection {manolescu,
    AUTHOR = {Manolescu, Ciprian},
     TITLE = {An introduction to knot {F}loer homology},
 BOOKTITLE = {Physics and mathematics of link homology},
    SERIES = {Contemp. Math.},
    VOLUME = {680},
     PAGES = {99--135},
 PUBLISHER = {Amer. Math. Soc., Providence, RI},
      YEAR = {2016},
   MRCLASS = {57M27 (57R58)},
  MRNUMBER = {3591644},
MRREVIEWER = {Laura P. Starkston},
       DOI = {10.1090/conm/680},
       URL = {https://doi.org/10.1090/conm/680},
}

@article {konstantinos_braid,
    AUTHOR = {Varvarezos, Konstantinos},
     TITLE = {3-braid knots do not admit purely cosmetic surgeries},
   JOURNAL = {Acta Math. Hungar.},
  FJOURNAL = {Acta Mathematica Hungarica},
    VOLUME = {164},
      YEAR = {2021},
    NUMBER = {2},
     PAGES = {451--457},
      ISSN = {0236-5294},
   MRCLASS = {57K10},
  MRNUMBER = {4279346},
MRREVIEWER = {Matt Rathbun},
       DOI = {10.1007/s10474-020-01129-z},
       URL = {https://doi.org/10.1007/s10474-020-01129-z},
}

@article {hanselman,
    AUTHOR = {Hanselman, Jonathan},
     TITLE = {Heegaard {F}loer homology and cosmetic surgeries in {$S^3$}},
   JOURNAL = {J. Eur. Math. Soc. (JEMS)},
  FJOURNAL = {Journal of the European Mathematical Society (JEMS)},
    VOLUME = {25},
      YEAR = {2023},
    NUMBER = {5},
     PAGES = {1627--1669},
      ISSN = {1435-9855},
   MRCLASS = {57K18 (57K31)},
  MRNUMBER = {4592856},
       DOI = {10.4171/jems/1218},
       URL = {https://doi.org/10.4171/jems/1218},
}

@book {rolfsen,
    AUTHOR = {Rolfsen, Dale},
     TITLE = {Knots and links},
    SERIES = {Mathematics Lecture Series},
    VOLUME = {7},
      NOTE = {Corrected reprint of the 1976 original},
 PUBLISHER = {Publish or Perish, Inc., Houston, TX},
      YEAR = {1990},
     PAGES = {xiv+439},
      ISBN = {0-914098-16-0},
   MRCLASS = {57M25},
  MRNUMBER = {1277811},
}

@misc{kirby,
    Author = {Kirby, Rob},
    TITLE = {Problems in Low-Dimensional Topology},
    Year = {1995},
    URL = {http://www.math.berkeley.edu/~kirby/},
}

@article{gabai,
title = "Genera of the alternating links",
author = "David Gabai",
year = "1986",
doi = "10.1215/S0012-7094-86-05336-6",
volume = "53",
pages = "577--581",
journal = "Duke Mathematical Journal",
issn = "0012-7094",
publisher = "Duke University Press",
number = "3",
URL = {https://doi.org/10.1215/S0012-7094-86-05336-6}
}
\end{document}